\documentclass[12pt,reqno]{amsart}

\usepackage{amsmath}
\usepackage[T1]{fontenc}
\usepackage{mathrsfs}
\usepackage{latexsym}
\usepackage{amssymb}
\usepackage{graphicx}
\usepackage{float}
\usepackage{extarrows}
\usepackage{epstopdf}
\usepackage{caption}
\usepackage{subcaption}
\usepackage{amsmath}
\usepackage{tikz}
\usepackage[colorlinks]{hyperref}
\usepackage[title]{appendix}
\usepackage{color} 
\usepackage{algorithm}
\usepackage[noend]{algpseudocode}
\usepackage{geometry}
\usepackage[utf8]{inputenc}

\def\DataDonutA{\texttt{donut1d}}
\def\DataDonutB{\texttt{donut2d}}
\def\DataSquares{\texttt{squares}}
\def\DataSpiral{\texttt{spiral}}
\def\mnist{\texttt{MNIST100}}


\def\CellWidth{3.55cm}
\def\RowHeight{3.1cm}
\def\PicWidth{3.5cm}

\newcommand\PlotLabels[4]{
\node at (0*\CellWidth,0.5*\RowHeight + 2mm) {#1};
\node at (1*\CellWidth,0.5*\RowHeight + 2mm) {#2};
\node at (2*\CellWidth,0.5*\RowHeight + 2mm) {#3};
\node at (3*\CellWidth,0.5*\RowHeight + 2mm) {#4};}

\newcommand\PlotRow[6]{%
\node at (-0.5*\CellWidth - 3mm,-#1*\RowHeight) {\rotatebox{90}{#2}};
\node at (0*\CellWidth,-#1*\RowHeight) {\PlotIm{#3}};%
\node at (1*\CellWidth,-#1*\RowHeight) {\PlotIm{#4}};%
\node at (2*\CellWidth,-#1*\RowHeight) {\PlotIm{#5}};%
\node at (3*\CellWidth,-#1*\RowHeight) {\PlotIm{#6}};%
}%

\newcommand{\PlotIm}[1]{\includegraphics[width=\PicWidth, clip, trim=3cm 2cm 4cm 2cm]{#1}}%


\author[Benning \and Celledoni \and Ehrhardt \and Owren \and Sch\"onlieb]{Martin Benning \and Elena Celledoni \and Matthias J. Ehrhardt \and Brynjulf Owren \and Carola-Bibiane Sch\"onlieb}

\title[Deep learning as optimal control problems]{Deep learning as optimal control problems: models and numerical methods}

\newtheorem{proposition}[subsection]{Proposition}
\newtheorem{remark} [subsection]{Remark}


\begin{document}
\maketitle

\begin{abstract}
We consider recent work of \cite{haber2017stable} and \cite{chang2018reversible}, where deep learning neural networks have been interpreted as discretisations of an optimal control problem subject to an ordinary differential equation constraint. We review the first order conditions for optimality, and the conditions ensuring optimality after discretisation. This leads to a class of algorithms for solving the discrete optimal control problem which guarantee that the corresponding discrete necessary conditions for optimality are fulfilled. The differential equation setting lends itself to learning additional parameters such as the time discretisation. We explore this extension alongside natural constraints (e.g. time steps lie in a simplex). We compare these deep learning algorithms numerically in terms of induced flow and generalisation ability.
\end{abstract}

\section{Introduction}
Deep learning has had a transformative impact on a wide range of tasks related to Artificial Intelligence, ranging from computer vision and speech recognition to playing games \cite{Igami:2017aa,LeCun:2015aa}. 

Despite impressive results in applications, the mechanisms behind deep learning remain rather mysterious, resulting in deep neural networks mostly acting as black-box algorithms. Consequently, also theoretical guarantees for deep learning are scarce, with major open problems residing in the mathematical sciences. An example are questions around the stability of training as well as the design of stable architectures. These questions are fed by results on the possible instabilities of the training (due to the high-dimensional nature of the problem in combination with its non-convexity) \cite{sutskever2013importance,dauphin2014identifying} which are connected to the lack of generalisability of the learned architecture, and adversarial vulnerability of trained networks \cite{Szegedy2013IntriguingPO} that can result in instabilities in the solution and gives rise to systematic attacks with which networks can be fooled \cite{explainingharnessing,kurakin2016adversarial,moosavi2016deepfool}. In this work we want to shed light on these issues by interpreting deep learning as an optimal control problem in the context of binary classification problems. Our work is mostly inspired by a very early paper by LeCun \cite{lecun1988theoretical}, and a series of recent works by Haber, Ruthotto et al. \cite{haber2017stable,chang2018reversible}. 

{\bf Classification in machine learning:} \emph{Classification} is a key task in machine learning; the goal is to learn functions, also known as \emph{classifiers}, that map their input arguments onto a discrete set of labels that are associated with a particular class. A simple example is image classification, where the input arguments are images that depict certain objects, and the classifier aims to identify the class to which the object depicted in the image belongs to. We can model such a classifier as a function $g:\mathbb{R}^n \rightarrow \{ c^0, c^1, \ldots, c^{K - 1}\}$ that takes $n$-dimensional real-valued vectors and maps them onto a discrete set of $K$ class labels. Note that despite using numerical values, there is no particular ordering of the class labels. The special case of $K = 2$ classes (and class labels) is known as \emph{binary classification}; for simplicity, we strictly focus on binary classification for the remainder of this paper. The extension to multi-class classification is straightforward, see e.g. \cite{Bishop2006ml}.

In supervised machine learning, the key idea is to find a classifier by estimating optimal parameters of a parametric function given pairs of data samples $\{ (x_i, c_i) \}_{i = 1}^m$, for $c_i \in \{ c^0, c^1\}$, and subsequently defining a suitable classifier that is parameterised with these parameters. The process of finding suitable parameters is usually formulated as a generalised regression problem, i.e. we estimate parameters $u, W, \mu$ by minimising a cost function of the form\footnote{One can of course use other cost functions such as the cross-entropy \cite{Bishop2006ml}. Our theory includes all smooth cost functions.}
\begin{align}
    \frac12 \sum_{i = 1}^m \left| \, \mathcal{C}\left( W h(x_i, u) + \mu \right) - c_i \, \right|^2 + \mathcal R(u) \, , \label{eq:reg_classification}
\end{align}
with respect to $u, W$ and $\mu$. Here $h$ is a model function parameterised by parameters $u$ that transforms inputs $x_i \in \mathbb{R}^n$ onto $n$-dimensional outputs. The vector $W \in \mathbb{R}^{1 \times n}$ is a \emph{weight} vector that weights this $n$-dimensional model output, whereas $\mu \in \mathbb{R}$ is a scalar that allows a \emph{bias} of the weighted model output, and $\mathcal{C}:\mathbb{R}^n \rightarrow \mathbb{R}$ is the so-called \emph{hypothesis-function} (cf. \cite{he2016identity, haber2017stable}) that maps this weighted and biased model output to a scalar value that can be compared to the class label $c_i$. The function $\mathcal{R}$ is a regularisation function that is chosen to ensure some form of regularity of the parameters $u$ and existence of parameters that minimise \eqref{eq:reg_classification}. Typical regularisation functions include the composition of the squared 2-norm with a linear operator (Tikhonov--Phillips regularisation \cite{tikhonov1963solution,phillips1962technique}; in statistics this technique is called ridge regression, while in machine learning it is known as weight decay \cite{plaut1986experiments}) or the 1-norm to induce sparsity of the weights \cite{santosa1986linear,tibshirani1996regression}. However, depending on the application, many different choices of regularisation functions are possible.

Note that minimising \eqref{eq:reg_classification} yields parameters that minimise the deviation of the output of the hypothesis function and the given labels. If we denote those parameters that minimise \eqref{eq:reg_classification} by $\hat W$, $\hat \mu$ and $\hat u$, and if the hypothesis function $\mathcal C$ maps directly onto the discrete set $\{ c_0, c_1\}$, then a suitable classifier can simply be defined via
\begin{align*}
    g(x) := \mathcal{C}\left( \hat W h(x, \hat u) + \hat \mu \right) \, .
\end{align*}
However, in practice the hypothesis function is often rather continuous and does not map directly on the discrete values $\{ c_0, c_1\}$. In this scenario, a classifier can be defined by subsequent thresholding. Let $c_0$ and $c_1$ be real numbers and w.l.o.g. $c_0 < c_1$, then a suitable classifier can for instance be defined via
\begin{align*}
g(x) := \begin{cases} c_0 & \mathcal{C}\left( \hat W h(x, \hat u) + \hat \mu \right) \leq \frac{c_0 + c_1}{2} \\
c_1 &  \mathcal{C}\left( \hat W h(x, \hat u) + \hat \mu \right) > \frac{c_0 + c_1}{2} \end{cases} \, .
\end{align*}

{\bf Deep learning as an optimal control problem:} One recent proposal towards the design of deep neural network architectures is \cite{weinan2017proposal,haber2017stable,chang2018reversible,Han2018,Li2018}. There, the authors propose an interpretation of deep learning by the popular Residual neural Network (ResNet) architecture \cite{He2016resnet} as discrete optimal control problems. Let
$$u^{[j]}:= \left(K^{[j]}, \beta^{[j]}\right),\qquad j=0,\dots , N-1, \qquad u = \left(u^{[0]}, \ldots, u^{[N-1]}\right) \, ,$$
where $K^{[j]}$ is a $n\times n$ matrix of weights, $\beta^{[j]}$ represents the biases, and $N$ is the number of layers.

In order to use ResNet for binary classification we can define the output of the model function $h$ in \eqref{eq:reg_classification} as the output of the ResNet. With the ResNet state variable denoted by $y = (y^{[0]}, \ldots, y^{[N]})$, $y^{[j]} = (y_1^{[j]}, \ldots, y_n^{[j]})$, this implies that the classification problem \eqref{eq:reg_classification} can be written as a constraint minimisation problem of the form
\begin{equation}
\label{Doptimisation}
\min_{y, u, W, \mu}\, \sum_{i = 1}^m \left| \, \mathcal{C}\left( W y^{[N]}_i + \mu \right) - c_i \, \right|^2 + \mathcal R(u),
\end{equation}
subject to the constraint
\begin{equation}
\label{Dconstraint}
y^{[j+1]}_i = y^{[j]}_i + \Delta t\,f(y^{[j]}_i,u^{[j]}), \qquad j=0,\dots , N-1, \qquad y^{[0]}_i = x_i \, .
\end{equation}
Here $\Delta t$ is a parameter which for simplicity at this stage can be chosen to be equal to $1$ and whose role will become clear in what follows. The constraint \eqref{Dconstraint} is the ResNet parametrisation of a neural network \cite{He2016resnet}. In contrast, the widely used feed-forward network that we will also investigate later is given by
\begin{equation}
\label{eq:feedforward}
y^{[j+1]}_i = f(y^{[j]}_i,u^{[j]}), \qquad j=0,\dots , N-1, \qquad y^{[0]}_i = x_i \, .
\end{equation}
For deep learning algorithms, one often has
\begin{equation}
\label{eq:deeplearning:f}
f(y^{[j]}_i, u^{[j]}):= \sigma \left( K^{[j]} \,y^{[j]}_i + \beta^{[j]} \right),
\end{equation}
where $\sigma$ is a suitable activation function acting component-wise on its arguments. For a more extensive mathematical introduction to deep learning we recommend~\cite{Higham2018deeplearning}.

Suppose, in what follows, that $y_i=y_i(t)$ and $u=u(t)=(K(t),\beta(t))$, $t \in[0,T]$, are functions of time  and $y_i^{[j]}\approx y_i(t_j)$.  To view \eqref{Doptimisation} and \eqref{Dconstraint} as a discretisation of an optimal control problem \cite{chang2018reversible}, one observes that the constraint equation \eqref{Dconstraint} is the discretisation of the ordinary differential equation (ODE) $\dot{y_i}=f(y_i,u),$ $y_i(0)=x_i$, on $[0,T]$, with step-size $\Delta t$ and with the forward Euler method. In the continuum, the following optimal control problem is obtained \cite{chang2018reversible},
\begin{equation}
\label{optimisation}
\min_{y, u, W, \mu}\,\sum_{i = 1}^m \left|\mathcal C\left(W\,y_i(T) + \mu\right) - c_i\right|^2 + \mathcal R(u)
\end{equation}
subject to the ODE constraint
\begin{equation}
\label{constraint}
\dot{y_i}=f(y_i,u),\quad t\in [0,T],\quad y_i(0)= x_i.
\end{equation}
Assuming that problem  \eqref{optimisation}-\eqref{constraint} satisfies necessary conditions for optimality \cite[ch. 9]{sontag2013mathematical}, and that a suitable activation function and cost function have been chosen,  a number of new deep learning algorithms can be generated. For example, the authors of \cite{chen2018neural,pmlr-v80-lu18d} propose to use accurate approximations of \eqref{constraint} obtained by black-box ODE solvers. Alternatively, some of these new strategies are obtained by considering constraint ODEs \eqref{constraint} with different structural properties, e.g. taking $f$ to be a Hamiltonian vector field, and by choosing accordingly the numerical integration methods to approximate \eqref{constraint}, \cite{chang2018reversible, haber2017stable}. This entails augmenting the dimension, e.g. by doubling the number of variables in the ODE, a strategy also studied in \cite{gholami19aua}. 
Stability is perhaps not important in networks with a fixed and modest number of layers. However, in designing and understanding deep neural networks, it is of importance to analyse its behaviour when the depth grows towards infinity. The stability of neural networks has been an important issue in many papers in this area. The underlying continuous dynamical system offers a common framework for analysing the behaviour of different architectures, for instance through backward error analysis, see e.g. \cite{hairer2006geometric}. The optimality conditions are useful for ensuring consistency between the discrete and continuous optima, and possibly the adjoint variables can be used to analyse the sensitivity of the network to perturbations in initial data.
We also want to point out that the continuous limit of neural networks is not only relevant for the study of optimal control problems, but also for optimal transport  \cite{sonoda2017double} or data assimilation \cite{abarbanel2018machine} problems.

{\bf Our contribution:} The main purpose of this paper is the investigation of different discretisations of the underlying continuous deep learning problem  \eqref{optimisation}-\eqref{constraint}. In \cite{haber2017stable, chang2018reversible} the authors investigate different ODE-discretisations for the neural network \eqref{constraint}, with a focus on deriving a neural network that describes a `stable' flow, i.e. the solution $y(T)$ should be bounded by the initial condition $y(0)$.

Our  point of departure from the state-of-the-art will be to outline the well established theory on optimal control,  numerical ODE problems based on  \cite{hager2000runge, ross2005roadmap, sanzserna2015sympletic,li2017maximum}, where we investigate the complete optimal control problem  \eqref{optimisation}-\eqref{constraint} under the assumption of necessary conditions for optimality \cite[ch. 9]{sontag2013mathematical}. 

The formulation of the deep learning problem \eqref{Doptimisation}-\eqref{Dconstraint} is a first-discretise-then-optimise approach to optimal control, where ODE \eqref{constraint} is first discretised with a forward Euler method to yield an optimisation problem which is then solved with gradient descent (direct method). In this setting the forward Euler method could be easily replaced by a different and more accurate integration method, but the back-propagation for computing the gradients of the discretised objective function will typically become more complicated to analyse.

Here, we propose a first-optimise-then-discretise approach for deriving new deep learning algorithms. There is a two-point boundary value Hamiltonian problem associated to \eqref{optimisation}-\eqref{constraint} expressing first  order optimality conditions of the optimal control problem \cite{Pontryagin}. This boundary value problem consists of \eqref{constraint}, with $y_i(0)=x$, together with its adjoint equation with boundary value at final time $T$, and in addition an algebraic constraint. In the first-optimise-then-discretise approach, this boundary value problem is solved by a numerical integration method. It is natural to solve equation \eqref{constraint} forward in time with a Runge--Kutta method (with non vanishing weights $b_i$, $i=1,\dots, s$), while the adjoint equation must be solved backward in time and with a matching  Runge--Kutta  method (with weights satisfying \eqref{quadInvPRK}) and imposing the constraints at each time step. If the combination of the forward integration method and its counterpart used backwards in time form a symplectic partitioned Runge--Kutta method then the overall discretisation is equivalent to a first-discretise-then-optimise approach, but with an efficient and automatic computation of the gradients \cite{hager2000runge, sanzserna2015sympletic}, see Proposition \ref{necdiscreteoptimality}.

We implement discretisation strategies based on different Runge--Kutta methods for \eqref{optimisation}-\eqref{constraint}. To make the various methods  comparable to each other, we use the same learned parameters for every Runge--Kutta stage, in this way the total the number of parameters will not depend on how many stages each method has. The discretisations are adaptive in time, and learning the step-sizes the number of layers is determined automatically by the algorithms. From the optimal control formulation we derive different instances of deep learning algorithms \eqref{Doptimisation}-\eqref{Dconstraint} by numerical discretisations of the first-order optimality conditions  using a partitioned Runge--Kutta method. 

{\bf Outline of the paper:} In Section \ref{sec:optcontrol} we derive the optimal control formulation of  \eqref{optimisation}-\eqref{constraint} and discuss its main properties. In particular, we derive the variational equation, the adjoint equation, the associated Hamiltonian and first-order optimality conditions. Different instances of the deep learning algorithm \eqref{Doptimisation}-\eqref{Dconstraint} are derived in Section \ref{sec:numdiscrete} by using symplectic partitioned Runge--Kutta methods applied to the constraint equation \eqref{constraint_noi}, and the adjoint equations \eqref{adjoint} and \eqref{constraintAd}. Using partitioned Runge--Kutta discretisation guarantees equivalence of the resulting optimality system to the one derived from a first-discretise-then-optimise approach using gradient descent, cf. Proposition \ref{necdiscreteoptimality}. In Section \ref{sec:PRKnetworks} we derive several new deep learning algorithms from such optimal control discretisations, and investigate by numerical experiments their dynamics in Section \ref{sec:numericaltests} on a selection of toy problems for binary classification in two dimensions.

\section{Properties of the optimal control problem}\label{sec:optcontrol}
In this section we review established literature on optimal control which justifies the use of the numerical methods of the next section.

\subsection{Variational equation}
In this section, we consider a slightly simplified formulation of \eqref{optimisation}-\eqref{constraint}. In particular, for simplicity we discard the term $\mathcal R(u)$ in \eqref{optimisation}, and remove the index "$i$" in \eqref{optimisation}-\eqref{constraint} and the summation over the number of data points. Moreover, as we here focus on the ODE \eqref{constraint}, we also remove the dependency on the classification parameters $W$ and $\mu$ for now. We rewrite the optimal control problem in the simpler form
\begin{equation}
\label{optimisation_noi}
\min_{y, u}\,\mathcal{J}(y(T)),
\end{equation}
subject to the ODE constraint
\begin{equation}
\label{constraint_noi}
\dot{y}=f(y,u),\quad y(0)= x.
\end{equation}

\noindent Then, the variational equation for \eqref{optimisation_noi}-\eqref{constraint_noi} reads
\begin{equation}
\label{variational}
\frac{d}{dt}v =\partial_{y} f(y(t),u(t))\,v +\partial_u f(y(t),u(t))\,w
\end{equation}
where $\partial_y f$ is the Jacobian of $f$ with respect to $y$, $\partial_u f$ is the Jacobian of $f$ with respect to $u$, and $v$ is the variation in $y$, while $w$ is the variation in $u$\footnote{$\tilde{y}(t)=y(t)+\xi v(t)$ for $|\xi|\rightarrow 0$ and similarly for $\tilde{u}(t)=u(t)+\xi w(t)$ $|\xi|\rightarrow 0$.}. 
Since $y(0)=x$ is fixed, $v(0)=0$. 

\subsection{Adjoint equation}
The adjoint of \eqref{variational} is a system of ODEs for a variable $p(t)$, obtained assuming
\begin{equation}
\label{quadraticInv}
\langle p(t),v(t)\rangle=\langle p(0),v(0)\rangle,\qquad \forall t\in[0,T].
\end{equation}
Then \eqref{quadraticInv} implies
$$\langle p(t),\dot{v}(t)\rangle=-\langle \dot{p}(t),v(t)\rangle,$$
an integration-by-parts formula which together with \eqref{variational} leads to the following equation for $p$:
\begin{equation}
\label{adjoint}
\frac{d}{dt}p=-\left(\partial_y f(y(t),u(t))\right)^T\,\left(p\right) \, ,
\end{equation}
with constraint
\begin{equation}
\label{constraintAd}
\left(\partial_u f(y(t),u(t))\right)^T p=0 \, ,
\end{equation}
see \cite{sanzserna2015sympletic}.
Here we have denoted by $\left(\partial_y f\right)^T$ the transpose of $\partial_y f$ with respect to the Euclidean inner product $\langle \cdot ,\cdot \rangle$, and similarly $\left(\partial_u f\right)^T$ is the transpose of $\partial_u f$.

\subsection{Associated Hamiltonian system}\label{sec:Hamiltonian}
For such an optimal control problem, there is an associated Hamiltonian system with Hamiltonian 
$$\mathcal H(y,p,u):=\langle p, f(y,u)\rangle$$
with
\begin{equation}
\label{HamiltonianSys}
\dot{y}=\partial_{p} \mathcal H,\quad \dot{p}=-\partial_y \mathcal H,\quad \partial_u \mathcal H = 0,
\end{equation}
where we recognise that the first equation $\dot{y}=\partial_{p} \mathcal{H}$ coincides with \eqref{constraint_noi}, the second $\dot{p}=-\partial_y \mathcal{H}$ with \eqref{adjoint} and the third $\partial_u\mathcal{H}=0$ with \eqref{constraintAd}.

The constraint Hamiltonian system is a differential algebraic equation of index one if the Hessian $\partial_{u,u} \mathcal{H}$ is invertible.
In this case, by the implicit function theorem there exists $\varphi$ such that
$$u=\varphi(y,p),\quad \mathrm{and}\quad \bar{\mathcal{H}}(y,p)=\mathcal{H}(y,p,\varphi(y,p)),$$
where the differential algebraic Hamiltonian system is transformed into a canonical Hamiltonian system of ODEs with Hamiltonian $\bar{\mathcal{H}}$. Notice that it is important to know that $\varphi$ exists, but it is not necessary to compute $\varphi$ explicitly for discretising the problem.

\subsection{First order necessary conditions for optimality}\label{sec:firstordcond}
The solution of the two point boundary value problem \eqref{constraint_noi} and \eqref{adjoint},\eqref{constraintAd} with $y(0)=x$ and 
$$p(T)=\left. \partial_y \mathcal{J}(y)\right|_{y=y(T)},$$
has the following property 
$$\langle \left. \partial_y \mathcal{J}(y)\right|_{y=y(T)},v(T)\rangle=\langle p(T), v(T)\rangle=\langle p(0), v(0)\rangle=0,$$
so the variation $v(T)$ is orthogonal to the gradient of the cost function $\left. \partial_y \mathcal{J}(y)\right|_{y=y(T)}$.  This means that the solution $(y(t),v(t),p(t))$ satisfies the first order necessary conditions for extrema of $\mathcal{J}$ (Pontryagin maximum principle) \cite{Pontryagin}, see also \cite[ch. 9.2]{sontag2013mathematical}.

\section{Numerical discretisation of the optimal control problem}\label{sec:numdiscrete}
We consider a time discrete setting $y^{[0]},y^{[1]},\dots ,y^{[N]}$, $u^{[0]},\dots,$ $u^{[N-1]}$ and a cost function $\mathcal{J}(y^{[N]})$, assuming to apply a numerical time discretisation $y^{[j+1]}=\Phi_{\Delta t}(y^{[j]},u^{[j]})$, $j=0,\dots, N-1$ of \eqref{constraint_noi}, the discrete optimal control problem becomes
\begin{equation*}
\label{Doptimisation1}
\min_{(y^{[j]},u^{[j]})} \,\mathcal{J}(y^{[N]}),
\end{equation*}
subject to
\begin{equation}
\label{Dconstraint1}
y^{[j]}=\Phi_{\Delta t}(y^{[j-1]},u^{[j-1]}),\quad y^{[0]}=x.
\end{equation}
Here the subscript $\Delta t$ denotes the discretisation step-size of the time interval $[0,T]$. This discrete optimal control problem corresponds  to a deep learning algorithm with the outlined choices for $f$ and $\mathcal{J}$, see for example \cite{haber2017stable}.

We assume that $\Phi_{\Delta t}$ is a Runge--Kutta method with non vanishing weights for the discretisation of \eqref{constraint_noi}. Applying a  Runge--Kutta method to \eqref{constraint_noi}, for example the forward Euler method, we obtain
$$y^{[j+1]}=y^{[j]}+\Delta t \,f(y^{[j]},u^{[j]}),$$
and taking variations $y^{[j+1]}+\xi v^{[j+1]}$, $y^{[j]}+\xi v^{[j]}$, , $u^{[j]}+\xi w^{[j]}$, for $\xi\rightarrow 0$ one readily obtains the same Runge--Kutta method applied to the variational equation
$$v^{[j]}=v^{[j+1]}+\Delta t\,  \left[ \partial_y f(y^{[j]},u^{[j]})\,v^{[j]} + \partial_u f(y^{[j]},u^{[j]})\,w^{[j]}\right].$$
This means that taking variations is an operation that commutes with applying Runge--Kutta methods. This is a well known property of Runge--Kutta methods and also of the larger class of so called B-series methods, see for example \cite[ch. VI.4, p. 191]{hairer2006geometric} for details.

In order to ensure that the first order necessary conditions for optimality from Section \ref{sec:firstordcond} are satisfied also after discretisation, fixing a certain Runge--Kutta method $\Phi_{\Delta t}$ for \eqref{constraint_noi}, we need to discretise the adjoint equations \eqref{adjoint} and \eqref{constraintAd} such that the overall method is a symplectic, partitioned Runge--Kutta method for the system spanned by \eqref{constraint_noi}, \eqref{adjoint} and \eqref{constraintAd}. This will in particular guarantee the preservation of the quadratic invariant  \eqref{quadraticInv}, as emphasised in \cite{sanzserna2015sympletic}. The general format of a partitioned Runge--Kutta method as applied to \eqref{constraint_noi}, \eqref{adjoint} and \eqref{constraintAd} is
for $j=0,\dots , N-1$
\begin{subequations}
\begin{eqnarray}
y^{[j+1]}&=& y^{[j]} +\Delta t\,\sum_{i=1}^s{b_i}\, f^{[j]}_i \label{PRK1}\\
f^{[j]}_i &=&f(y^{[j]}_i, u^{[j]}_i),\quad i=1,\dots, s, \label{PRK2}\\
y^{[j]}_{i} &=&y^{[j]} + \Delta t\,\sum_{l=1}^s a_{i,l}\,f^{[j]}_{l},\quad i=1,\dots, s \label{PRK3}
\end{eqnarray} \label{PRK:y}
\end{subequations}%
\begin{subequations}%
\begin{eqnarray}
p^{[j+1]} &=&p^{[j]}+\Delta t\,\sum_{i=1}^s\tilde{b}_i\,\ell^{[j]}_{i}\label{PRK4}\\
\ell^{[j]}_{i} &=& -\partial_y f(y^{[j]}_{i}, u^{[j]}_{i})^T p^{[j]}_{i}, \quad i=1, \dots, s,\label{PRK5}\\
p^{[j]}_{i} &=& p^{[j]} + \Delta t\,\sum_{l=1}^s\tilde{a}_{i,l}\ell^{[j]}_{l},\quad i=1,\dots, s \label{PRK6}
\end{eqnarray} \label{PRK:p}
\end{subequations}%
\begin{eqnarray}%
&&\left(\partial_u f(y^{[j]}_{i}, u^{[j]}_{i})\right)^T p^{[j]}_{i}=0, \quad i=1,\dots, s \label{PRK:opt}
\end{eqnarray} 
and boundary conditions  $y^{[0]}=x$, $p^{[N]}:=\partial\mathcal{J}(y^{[N]}).$ We will assume $b_i\neq 0$, 
$i=1,\dots, s.$ \footnote{Generic $b_i$ in the context of optimal control is discussed in  \cite{sanzserna2015sympletic}.} 

\noindent It is well known \cite{hairer2006geometric} that if the coefficients of a partitioned Runge--Kutta satisfy
{\small
\begin{equation}
\label{quadInvPRK}
b_i=\tilde{b}_i,\quad b_i\tilde{a}_{i,j}+\tilde{b}_j a_{i,j}-b_i\tilde{b}_j=0,\quad c_i=\tilde{c}_i,\quad i,j=1,\dots , s,
\end{equation}
}
then the partitioned Runge--Kutta preserves invariants of the form
$$S:\mathbb{R}^d\times \mathbb{R}^{d}\rightarrow \mathbb{R},$$
where $S$ is bilinear. As a consequence the invariant $S(v(t),p(t)):=\langle p(t),v(t)\rangle$ \eqref{quadraticInv} will be preserved by such method. These partitioned Runge--Kutta methods are called symplectic. 

The simplest symplectic partitioned Runge--Kutta method is the symplectic Euler method, which is a combination of the explicit Euler method $b_1=1$, $a_{1,1}=0$, $c_1=0$  and the implicit Euler method $\tilde{b}_1=1$, $\tilde{a}_{1,1}=1$  $\tilde{c}_1=1$. This method applied to \eqref{constraint_noi}, \eqref{adjoint} and \eqref{constraintAd} gives
\begin{eqnarray*}
y^{[j+1]}&=&y^{[j]}+\Delta t\,f(y^{[j]},u^{[j]}),\\
p^{[j+1]}&=&p^{[j]}-\Delta t\,\left(\partial_y f(y^{[j]},u^{[j]})\right)^T p^{[j+1]},\\
0&=&\left(\partial_u f(y^{[j]},u^{[j]})\right)^T p^{[j+1]},
\end{eqnarray*} 
for $j=0,\dots , N-1$ and with the boundary conditions $y^{[0]}=x$, and $p^{[N]}:=\partial_y \mathcal{J}(y^{[N]})$.
 
\begin{proposition}\label{necdiscreteoptimality}
If \eqref{constraint_noi}, \eqref{variational} and \eqref{adjoint} are discretised with \eqref{PRK:y}-\eqref{PRK:opt} with $b_i\neq 0$, $i=1,\dots, s$, then the first order necessary conditions for optimality for the discrete optimal control problem
\begin{equation}
\min_{\begin{array}{l}
\{u_i^{[j]}\}_{j=0}^{N-1},\\
\{y^{[j]}\}_{j=1}^{N},\{y_i^{[j]}\}_{j=1}^{N},
\end{array}}\mathcal{J}(y^{[N]}),
\end{equation}
subject to
\begin{eqnarray}
\label{RK1}
y^{[j+1]}&=& y^{[j]} +\Delta t\,\sum_{i=1}^s{b_i}\, f^{[j]}_i\\
\label{RK2}
f^{[j]}_i &=&f(y^{[j]}_i, u^{[j]}_i),\quad i=1,\dots, s,\\
\label{RK3}
y^{[j]}_{i}&=&y^{[j]}+\Delta t\,\sum_{m=1}^s a_{i,m}\,f^{[j]}_{m},\quad i=1,\dots, s,
\end{eqnarray}
are satisfied.
\end{proposition}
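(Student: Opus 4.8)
The plan is to compute the first order necessary conditions (the discrete Pontryagin/Lagrange conditions) of the stated discrete optimal control problem directly, by introducing Lagrange multipliers for the constraints \eqref{RK1}--\eqref{RK3}, and then to show that the resulting stationarity equations, after an appropriate identification of the multipliers with the adjoint variables $p^{[j]}$ and the internal stage adjoints $p^{[j]}_i$, coincide precisely with the partitioned Runge--Kutta scheme \eqref{PRK4}--\eqref{PRK6} together with the algebraic constraint \eqref{PRK:opt}, provided the coefficients of the $p$-method are chosen as in \eqref{quadInvPRK}. In other words, the symplecticity conditions \eqref{quadInvPRK} are not an extra hypothesis one imposes but exactly what one is forced into if one wants the discrete adjoint of the Runge--Kutta method applied to \eqref{constraint_noi} to again be of Runge--Kutta form; this is the classical Hager/Sanz-Serna result, and the proof is essentially a bookkeeping argument.

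Concretely, I would form the Lagrangian
\begin{align*}
\mathcal{L} &= \mathcal{J}(y^{[N]}) - \sum_{j=0}^{N-1}\Bigl\langle p^{[j+1]},\, y^{[j+1]} - y^{[j]} - \Delta t\sum_{i=1}^s b_i f^{[j]}_i\Bigr\rangle \\
&\qquad - \Delta t\sum_{j=0}^{N-1}\sum_{i=1}^s b_i\Bigl\langle P^{[j]}_i,\, y^{[j]}_i - y^{[j]} - \Delta t\sum_{m=1}^s a_{i,m} f^{[j]}_m\Bigr\rangle,
\end{align*}
where $p^{[j+1]}$ are multipliers for \eqref{RK1} (with \eqref{RK2} substituted in) and the $P^{[j]}_i$ are multipliers for the stage equations \eqref{RK3}, scaled by $\Delta t\, b_i$ for convenience. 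Setting $\partial\mathcal{L}/\partial y^{[N]} = 0$ gives the boundary condition $p^{[N]} = \partial_y\mathcal{J}(y^{[N]})$. Setting $\partial\mathcal{L}/\partial y^{[j]} = 0$ for $1\le j\le N-1$ relates $p^{[j]}$, $p^{[j+1]}$ and the $P^{[j]}_i$; setting $\partial\mathcal{L}/\partial y^{[j]}_i = 0$ produces an equation for $P^{[j]}_i$ in terms of $p^{[j+1]}$, the other stage multipliers, and $\partial_y f(y^{[j]}_i,u^{[j]}_i)^T$; and setting $\partial\mathcal{L}/\partial u^{[j]}_i = 0$ gives $\bigl(\partial_u f(y^{[j]}_i,u^{[j]}_i)\bigr)^T P^{[j]}_i = 0$, which is \eqref{PRK:opt} once we identify $P^{[j]}_i$ with $p^{[j]}_i$. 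The remaining step is the algebraic manipulation: define $\tilde b_i := b_i$, $\tilde c_i := c_i$, and $\tilde a_{i,m}$ via $b_i\tilde a_{i,m} + \tilde b_m a_{m,i} - b_i\tilde b_m = 0$ (this is \eqref{quadInvPRK} solved for $\tilde a_{i,m}$, legitimate since $b_i\ne0$), and check that the stationarity equations for $y^{[j]}$ and $y^{[j]}_i$ rearrange exactly into \eqref{PRK4} and \eqref{PRK6} with $\ell^{[j]}_i = -\partial_y f(y^{[j]}_i,u^{[j]}_i)^T p^{[j]}_i$ as in \eqref{PRK5}.

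The main obstacle — really the only nontrivial point — is this last index-chasing step: one must verify that the coefficients appearing naturally in $\partial\mathcal{L}/\partial y^{[j]}_i = 0$, namely the combination $b_i\delta_{im} + b_m a_{m,i}$ (up to the $\Delta t\,b$ scaling conventions), are precisely $b_i\tilde a_{i,m}$ under the symplecticity relation, and similarly that the external update for $p$ closes correctly. This is where sign conventions, the placement of the $\Delta t\,b_i$ weights in the definition of the stage multipliers, and the transposition of $\partial_y f$ all have to be tracked carefully; a single misplaced factor breaks the identification. Once the substitution of variables is set up correctly, the verification is a direct comparison of two systems of equations term by term, and I would present it as such rather than re-deriving symplecticity from scratch — citing \cite{hager2000runge, sanzserna2015sympletic} for the underlying fact that \eqref{quadInvPRK} characterises exactly those partitioned Runge--Kutta pairs for which the discrete adjoint equals the adjoint of the discrete equation.
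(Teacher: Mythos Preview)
Your approach is essentially the paper's: form the Lagrangian with stage multipliers weighted by $\Delta t\,b_i$, take variations, and match the resulting stationarity conditions to \eqref{PRK:p}--\eqref{PRK:opt}. The paper carries this out in the appendix by expanding $\delta\mathcal L=0$ and collecting terms in the variations of $y^{[j]}$, $y_i^{[j]}$, $u_i^{[j]}$, exactly as you describe.

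One imprecision worth flagging before you execute: your stage multipliers $P_i^{[j]}$ are \emph{not} the objects to identify with $p_i^{[j]}$; they play the role of the paper's $\ell_i^{[j]}$ (up to sign). When you compute $\partial\mathcal L/\partial u_i^{[j]}$, remember that $f(y_i^{[j]},u_i^{[j]})$ appears not only in the step constraint (paired with $p^{[j+1]}$) but also in \emph{every} stage constraint $k$ via the term $a_{k,i}f_i^{[j]}$ (paired with $P_k^{[j]}$). Hence the stationarity condition reads
\[
\bigl(\partial_u f(y_i^{[j]},u_i^{[j]})\bigr)^T\Bigl[p^{[j+1]}+\Delta t\sum_{k=1}^s \tfrac{b_k a_{k,i}}{b_i}\,P_k^{[j]}\Bigr]=0,
\]
and it is the bracketed combination that must be identified with $p_i^{[j]}$, not $P_i^{[j]}$ itself. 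The same combination reappears in $\partial\mathcal L/\partial y_i^{[j]}$, and comparing it to \eqref{PRK6} (after using \eqref{PRK4} to trade $p^{[j]}$ for $p^{[j+1]}$) is precisely where the symplecticity relation $\tilde a_{i,k}=\tilde b_k - b_k a_{k,i}/b_i$ is forced. Your outline is otherwise correct; just be careful that the identification you actually need is $P_i^{[j]}\leftrightarrow -\ell_i^{[j]}$ and $p_i^{[j]}\leftrightarrow$ the bracketed sum, rather than $P_i^{[j]}\leftrightarrow p_i^{[j]}$.
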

\begin{proof} See appendix~\ref{DNOC}.\end{proof}

In \eqref{PRK:y}--\eqref{PRK:opt} it is assumed that there is a parameter set $u^{[j]}_i$ for each of the $s$ stages in each layer. This may simplified by considering only one parameter set $u^{[j]}$ per layer. Discretisation of the Hamiltonian boundary value problem with a symplectic partitioned Runge--Kutta method yields in this case the following expressions for the derivative of the cost function with respect to the controls.

\begin{proposition} \label{prop:grad}
Let $y^{[j]}$ and $p^{[j]}$ be given by \eqref{PRK:y} and \eqref{PRK:p} respectively. Then the gradient of the cost function $\mathcal J$ with respect to the controls is given by 
\begin{subequations}
\begin{align}
    \ell_i^{[j]}& = - \partial_y f(y_i^{[j]},u^{[j]})^T \left(p^{[j+1]}-\Delta t\sum_{k=1}^s \frac{a_{k,i}b_k}{b_i} \ell_k^{[j]}\right) \quad i = 1, \ldots, s\label{elldefgrad}\\
    \partial_{u^{[j]}} \mathcal J(y^{[N]}) &= \Delta t \sum_{i=1}^s b_i \partial_{u^{[j]}} f(y_i^{[j]},u^{[j]})^T
    \left(p^{[j+1]}- \Delta t\sum_{k=1}^s \frac{a_{k,i}b_k}{b_i} \ell_k^{[j]}\right).
\end{align}\label{gradu}
\end{subequations}
\end{proposition}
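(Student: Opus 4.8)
The plan is to prove \eqref{gradu} by the standard first-discretise-then-optimise (discrete Lagrangian) argument, the same mechanism that underlies Proposition~\ref{necdiscreteoptimality}, but now differentiating with respect to the single control $u^{[j]}$ shared by all $s$ stages of layer $j$ (i.e. specialising \eqref{PRK:y}--\eqref{PRK:p} to $u^{[j]}_i\equiv u^{[j]}$) rather than with respect to stagewise controls. First I would introduce the Lagrangian
\begin{align*}
\mathcal{L} = \mathcal{J}(y^{[N]}) &- \sum_{j=0}^{N-1}\left\langle p^{[j+1]},\ y^{[j+1]} - y^{[j]} - \Delta t\,{\textstyle\sum_{i=1}^s} b_i f(y_i^{[j]},u^{[j]})\right\rangle\\
&- \sum_{j=0}^{N-1}\sum_{i=1}^s\left\langle P_i^{[j]},\ y_i^{[j]} - y^{[j]} - \Delta t\,{\textstyle\sum_{l=1}^s} a_{i,l} f(y_l^{[j]},u^{[j]})\right\rangle,
\end{align*}
with $p^{[j+1]}$ the multiplier of the layer update \eqref{PRK1} and $P_i^{[j]}$ the multiplier of the $i$-th stage equation \eqref{PRK3}. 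Since $\mathcal{L}$ equals $\mathcal{J}(y^{[N]})$ identically on any trajectory satisfying \eqref{PRK1}--\eqref{PRK3}, the total derivative of $\mathcal{J}(y^{[N]})$ with respect to $u^{[j]}$ coincides with the partial derivative $\partial_{u^{[j]}}\mathcal{L}$ provided the multipliers are chosen so that the partial derivative of $\mathcal{L}$ with respect to every state variable $y^{[j]}$ and $y_i^{[j]}$ vanishes; the crux is that this requirement forces the multipliers to be the adjoint sequence generated by \eqref{PRK:p}, and it is there that the symplecticity relations \eqref{quadInvPRK} are used.

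Next I would carry out the state-stationarity conditions in order. The condition $\partial_{y^{[N]}}\mathcal{L}=0$ returns the terminal condition $p^{[N]}=\partial\mathcal{J}(y^{[N]})$. For $1\le j\le N-1$, $\partial_{y^{[j]}}\mathcal{L}=0$ gives $p^{[j]}=p^{[j+1]}+\sum_{i=1}^s P_i^{[j]}$, and $\partial_{y_i^{[j]}}\mathcal{L}=0$ gives
$$P_i^{[j]} = \Delta t\,\partial_y f(y_i^{[j]},u^{[j]})^T\Big(b_i p^{[j+1]}+{\textstyle\sum_{l=1}^s} a_{l,i}P_l^{[j]}\Big).$$
Setting $\ell_i^{[j]} := -P_i^{[j]}/(\Delta t\,b_i)$, which is admissible since $b_i\neq 0$, turns the first relation into the adjoint layer update \eqref{PRK4} (with $\tilde b_i=b_i$) and the second into exactly \eqref{elldefgrad}; equivalently these are \eqref{PRK5}--\eqref{PRK6} once the stage adjoint $p_i^{[j]}=p^{[j]}+\Delta t\sum_l \tilde a_{i,l}\ell_l^{[j]}$ is rewritten as $p^{[j+1]}-\Delta t\sum_{k=1}^s \tfrac{a_{k,i}b_k}{b_i}\ell_k^{[j]}$ by means of \eqref{quadInvPRK}. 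This both identifies the Lagrange multipliers with the PRK adjoint and ties the computation directly to Proposition~\ref{necdiscreteoptimality}.

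Finally, with the multipliers fixed, it remains to differentiate $\mathcal{L}$ in $u^{[j]}$. Since $u^{[j]}$ enters $\mathcal{L}$ only through the $s$ terms $f(y_i^{[j]},u^{[j]})$, carrying weight $b_i$ in the layer update and weight $a_{l,i}$ in the $l$-th stage equation, one obtains $\partial_{u^{[j]}}\mathcal{L}=\Delta t\sum_{i=1}^s\partial_u f(y_i^{[j]},u^{[j]})^T\big(b_i p^{[j+1]}+\sum_{l=1}^s a_{l,i}P_l^{[j]}\big)$; factoring $b_i$ out of the bracket and substituting $P_l^{[j]}=-\Delta t\,b_l\,\ell_l^{[j]}$ yields precisely the second identity in \eqref{gradu}, while the first identity in \eqref{gradu} is just the rewriting of $\ell_i^{[j]}$ established above. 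I expect the only genuine difficulty to be the index bookkeeping: tracking which layer and stage equations each of $y^{[j]}$, $y_i^{[j]}$ and $u^{[j]}$ feeds into, handling the transposition in \eqref{quadInvPRK} that replaces the $\tilde a_{i,l}$-weighted combination in $p_i^{[j]}$ by the $a_{k,i}$-weighted one appearing in \eqref{elldefgrad}--\eqref{gradu}, and checking throughout that the minus sign in the definition $\ell_i^{[j]}=-\partial_y f(\cdot)^T p_i^{[j]}$ of \eqref{PRK5} is consistent with the multiplier identification $P_i^{[j]}=-\Delta t\,b_i\,\ell_i^{[j]}$.
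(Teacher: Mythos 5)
Your argument is correct and is essentially the paper's own mechanism: Proposition \ref{prop:grad} is stated in the paper without a separate proof, and the Lagrangian computation in Appendix \ref{DNOC} for Proposition \ref{necdiscreteoptimality} is exactly your calculation, there specialised to stagewise controls so that the $\partial_u f$-terms are set to zero, whereas you correctly read those same terms off as $\partial_{u^{[j]}}\mathcal J$ once the control is shared across the stages of a layer. Your multiplier identification $P_i^{[j]}=-\Delta t\,b_i\,\ell_i^{[j]}$ (using $b_i\neq 0$), the resulting relation \eqref{elldefgrad}, and the rewriting of the stage adjoint as $p_i^{[j]}=p^{[j+1]}-\Delta t\sum_{k}\tfrac{a_{k,i}b_k}{b_i}\ell_k^{[j]}$ via \eqref{quadInvPRK} all check out (noting only that the stage-stationarity relation is also needed, and holds, for $j=0$).
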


\begin{remark}
In the case that the Runge--Kutta method is explicit we have $a_{k,i}=0$ for $i\geq k$. In this case the stages $\ell_s^{[j]},\ell_{s-1}^{[j]},\ldots,\ell_{1}^{[j]}$ can be computed explicitly
from~\eqref{elldefgrad}.
\end{remark}

\begin{remark}
For the explicit Euler method, these formulas greatly simplify and the derivative of the cost function with respect to the controls can be computed as
\begin{align}
y^{[j+1]}&= y^{[j]} +\Delta t \, f(y^{[j]}, u^{[j]}) \\
p^{[j+1]} &=p^{[j]}-\Delta t\,\partial_y f(y^{[j]}, u^{[j]})^T p^{[j+1]}\\
\partial_{u^{[j]}} \mathcal J(y^{[N]}) &= \Delta t \,  \partial_{u^{[j]}} f(y^{[j]},u^{[j]})^T p^{[j+1]}.
\end{align}
\end{remark}

\begin{remark}
The convergence of the outlined Runge--Kutta discretisations to the continuous optimal control problem has been addressed in \cite{hager2000runge} see also \cite{dontchev2000tea} and recently also in the context of deep learning in \cite{thorpe2018dlo}.
\end{remark}

\section{Optimal Control motivated Neural Network Architectures}\label{sec:PRKnetworks}
An ODE-inspired neural network architecture is uniquely defined by choosing $f$ and specifying a time discretisation of \eqref{constraint_noi}. Here we will focus on the common choice $f(u, y) = \sigma(K y + \beta), u = (K, \beta)$ (e.g. ResNet) and also discuss a novel option $f(u, y) = \alpha \, \sigma(K y + \beta), u = (K, \beta, \alpha)$ which we will refer to as ODENet.

\subsection{Runge--Kutta networks, e.g. ResNet}
Here we choose $f(u, y) = \sigma(K y + \beta), u = (K, \beta)$. For simplicity we focus on the simplest Runge--Kutta method---the explicit Euler. This corresponds to the ResNet in the machine learning literature.%

\noindent In this case the network relation (forward propagation) is given by
\begin{align}
y^{[j+1]}&= y^{[j]} +\Delta t \, \sigma \left(K^{[j]} y^{[j]} + \beta^{[j]}\right)
\end{align}
and gradients with respect to the controls can be computed by first solving for the adjoint variable (backpropagation)
\begin{align}
\gamma^{[j]} &= \sigma^\prime\left(K^{[j]} y^{[j]} + \beta^{[j]}\right) \odot p^{[j+1]} \\
p^{[j+1]} &= p^{[j]} - \Delta t \, K^{[j],T} \gamma^{[j]}
\end{align}
and then computing
\begin{align}
\partial_{K^{[j]}} \mathcal J(y^{[N]}) &= \Delta t \,  \gamma^{[j]} \, y^{[j], T}\\
\partial_{\beta^{[j]}} \mathcal J(y^{[N]}) &= \Delta t \, \gamma^{[j]}.
\end{align}

\subsection{ODENet}
In contrast to the models we discussed so far, we can also enlarge the set of controls to model varying time steps. Let $u = (K, \beta, \alpha)$ and define
\begin{equation}
f(u, y) = \alpha \, \sigma(K y + \beta) \, .  \label{eq:odenet:f} 
\end{equation}
The function $\alpha$ can be interpreted as varying time steps. Then the network relation (forward propagation) is given by
\begin{align}
y^{[j+1]}&= y^{[j]} + \Delta t \, \alpha^{[j]} \, \sigma\left(K^{[j]} y^{[j]} + \beta^{[j]}\right)
\end{align}
and gradients with respect to the controls can be computed by first solving for the adjoint variable (backpropagation)
\begin{align}
\gamma^{[j]} &= \alpha^{[j]} \, \sigma^\prime \left(K^{[j]} y^{[j]} + \beta^{[j]}\right) \odot p^{[j+1]} \\
p^{[j+1]} &= p^{[j]} - \Delta t \, K^{[j],T} \gamma^{[j]}
\end{align}
and then computing
\begin{align}
\partial_{K^{[j]}} \mathcal J(y^{[N]}) &= \Delta t \,  \gamma^{[j]} \, y^{[j], T}\\
\partial_{\beta^{[j]}} \mathcal J(y^{[N]}) &= \Delta t \, \gamma^{[j]} \\
\partial_{\alpha^{[j]}} \mathcal J(y^{[N]}) &= \Delta t \, \left\langle p^{[j+1]}, \sigma \left(K^{[j]} y^{[j]} + \beta^{[j]} \right) \right\rangle \, .
\end{align}

\noindent It is natural to assume the learned time steps $\alpha$ should lie in the set of probability distributions
$$
S = \left \{ \alpha \; \middle| \;  \alpha \geq 0, \; \int \alpha = 1 \right \} \, ,
$$
or discretised in the simplex
\begin{equation}
S = \left \{ \alpha \in \mathbb R^N \; \middle| \;  \alpha^{[j]} \geq 0, \; \sum_j \alpha^{[j]} = 1 \right \} \, . \label{eq:simplex}
\end{equation}
This discretised constraint can easily be incorporated into the learning process by projecting the gradient descent iterates onto the constraint set $S$. Efficient finite-time algorithms are readily available~\cite{Duchi2008}.

\section{Numerical results}\label{sec:numericaltests}
\begin{algorithm}[t]
\caption{Training ODE-inspired neural networks with gradient descent.} \label{ALG:GD1}
\raggedright
\textbf{Input:} initial guess for the controls $u$, step-size $\tau$
\begin{algorithmic}[1]
    \For{$k = 1, \ldots$}
    \State forward propagation: compute $y$ via  \eqref{PRK:y}
    \State backpropagation: compute $p$ via \eqref{PRK:p}
    \State compute gradient $g$ via \eqref{gradu} and \eqref{gradientclassifier}
    \State update controls: $u = u - \tau g$
    \EndFor
\end{algorithmic}%
\end{algorithm}

\begin{algorithm}[t]
\caption{Training ODE-inspired neural networks with gradient descent and backtracking.} \label{ALG:GD2}
\raggedright
\textbf{Input:} initial guess for  controls $u$ and parameter $L$, \\ hyperparameters $\overline{\rho} > 1$ and $\underline{\rho} < 1$.
\begin{algorithmic}[1]
    \State forward propagation: compute $y$ with controls $u$ via \eqref{PRK:y} and $\phi = \mathcal J(y^{[N]})$
    \For{$k = 1, \ldots$}
    \State backpropagation: compute $p$ via \eqref{PRK:p}
    \State compute gradient $g$ via \eqref{gradu} and \eqref{gradientclassifier}
    \For{$t = 1, \ldots$}
    \State update controls: $\tilde u = u - \frac1L g$
    \State forward propagation: compute $\tilde y$  with controls $\tilde u$ and $\tilde \phi = \mathcal J(\tilde y^{[N]})$
    \If{$\tilde \phi \leq \phi + \langle g, \tilde u - u\rangle + \frac L2 \|\tilde u - u\|^2$}
    \State accept: $u = \tilde u, y = \tilde y, \phi = \tilde \phi$, $L = \underline{\rho} L$ 
    \State \textbf{break} inner loop
    \Else{} reject: $L = \overline{\rho} L$
    \EndIf
    \EndFor
    \EndFor
\end{algorithmic}%
\end{algorithm}

\begin{figure}
\renewcommand{\PlotIm}[1]{\includegraphics[width=\PicWidth, clip, trim=4cm 2cm 4cm 2cm]{#1}}%
\begin{tikzpicture}
\PlotRow{0}{}
{pics_misc/example_donut1d}
{pics_misc/example_donut2d}
{pics_misc/example_squares2d}
{pics_misc/example_spiral2d}
\PlotLabels{\DataDonutA}{\DataDonutB}{\DataSquares}{\DataSpiral}%
\end{tikzpicture}%
\caption{The four data sets used in the numerical study.}\label{FIG:DATA}
\end{figure}

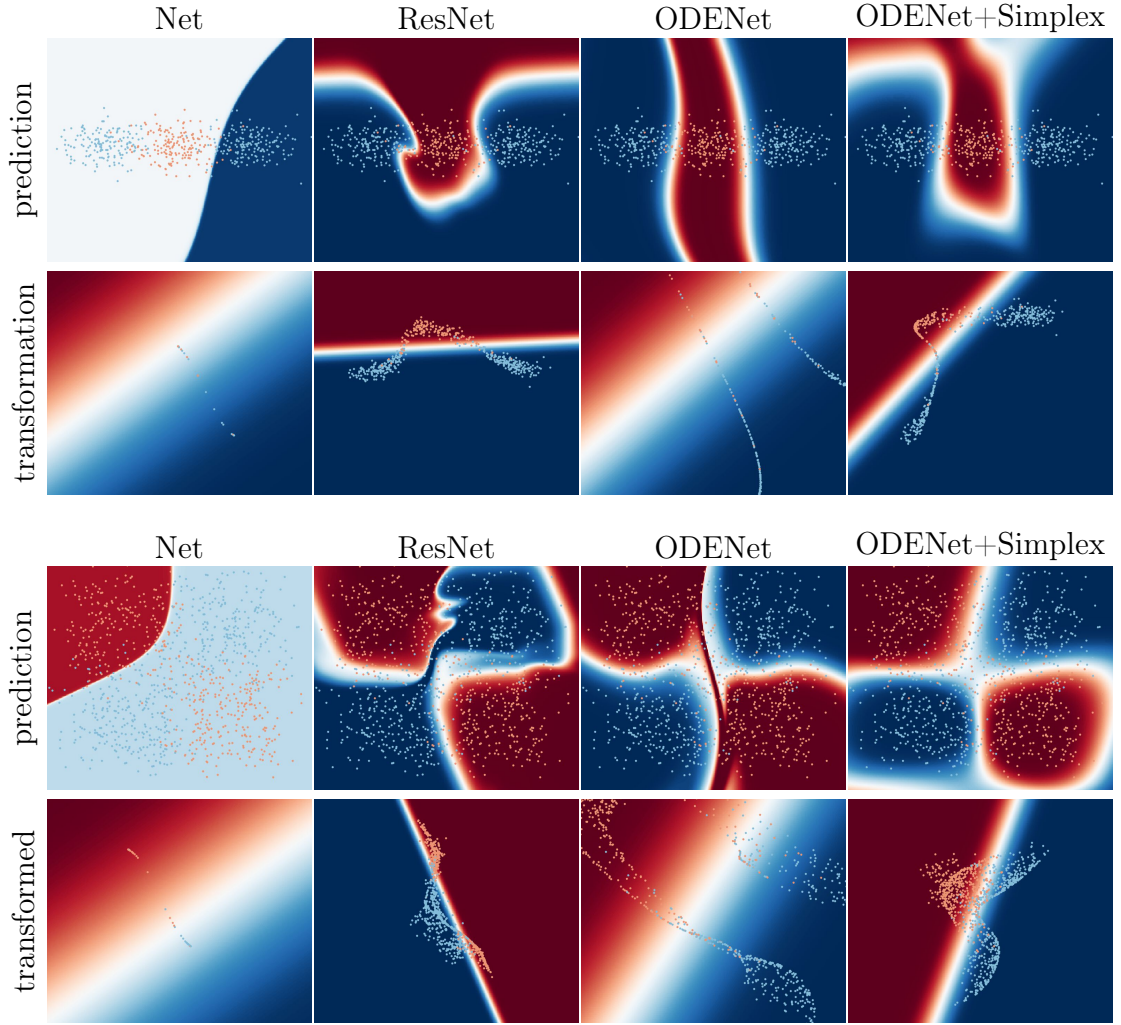
\begin{figure}
\begin{tikzpicture}
\PlotRow{0}{prediction}
{results_donut1d_20layers/Net_squared_seed1_prediction}
{results_donut1d_20layers/ResNet_squared_seed1_prediction}
{results_donut1d_20layers/ODENet_squared_seed1_prediction}
{results_donut1d_20layers/ODENetSimplex_squared_seed1_prediction}
\PlotRow{1}{transformation}
{results_donut1d_20layers/Net_squared_seed1_transformed}
{results_donut1d_20layers/ResNet_squared_seed1_transformed}
{results_donut1d_20layers/ODENet_squared_seed1_transformed}
{results_donut1d_20layers/ODENetSimplex_squared_seed1_transformed}
\PlotLabels{Net}{ResNet}{ODENet}{ODENet+Simplex}
\end{tikzpicture}

\vspace*{2mm}

\begin{tikzpicture}
\PlotRow{0}{prediction}
{results_squares2d_20layers/Net_squared_seed1_prediction}
{results_squares2d_20layers/ResNet_squared_seed1_prediction}
{results_squares2d_20layers/ODENet_squared_seed1_prediction}
{results_squares2d_20layers/ODENetSimplex_squared_seed1_prediction}
\PlotRow{1}{transformed}
{results_squares2d_20layers/Net_squared_seed1_transformed}
{results_squares2d_20layers/ResNet_squared_seed1_transformed}
{results_squares2d_20layers/ODENet_squared_seed1_transformed}
{results_squares2d_20layers/ODENetSimplex_squared_seed1_transformed}
\PlotLabels{Net}{ResNet}{ODENet}{ODENet+Simplex}
\end{tikzpicture}
\caption{Learned transformation and classifier for data set \DataDonutA~(top) and \DataSquares~(bottom).} \label{FIG:M:QUAL}
\end{figure}

\subsection{Setting, Training and Data sets}%
Throughout the numerical experiments we use labels $c_i \in \{0, 1\}$ and make use of the link function $\sigma(x) = \tanh(x)$ and hypothesis function $\mathcal H(x) = 1/(1 + \exp(-x))$. For all experiments we use two channels ($n=2$) but vary the number of layers $N$\footnote{In this paper we make the deliberate choice of keeping the number of dimensions equal to the dimension of the original data samples rather than augmenting or doubling the number of dimensions as proposed in \cite{gholami19aua} or \cite{haber2017stable}. Numerical experiments after augmenting the dimension of the ODE (not reported here) led to improved performance for all the considered architectures.}.

In all numerical experiments we use gradient descent with backtracking, see Algorithm \ref{ALG:GD2}, to train the network (estimate the controls). The algorithm requires the derivatives with respect to the controls which we derived in the previous section. Finally, the gradients with respect to $W$ and $\mu$ of the discrete cost function are required in order to update these parameters with gradient descent, which read as
\begin{subequations}
\begin{eqnarray}
\label{gradientW}
\gamma_i &=& \left(\mathcal{C}(W y_i^{[N]}+\mu)-c_i\right)\odot \mathcal{C}'(Wy_i^{[N]}+\mu) \\
\partial_W\, \mathcal{J}(y_i^{[N]},W,\mu)&=& \gamma_i y_i^{[N], T},\\
\label{gradientmu}
\partial_{\mu}\, \mathcal{J}(y_i^{[N]},W,\mu)&=& \gamma_i. 
\end{eqnarray} \label{gradientclassifier}
\end{subequations}

We consider 4 different data sets (\DataDonutA, \DataDonutB, \DataSquares, \DataSpiral) that have different topological properties, which are illustrated in Figure~\ref{FIG:DATA}. These are samples from a random variable with prescribed probability density functions. We use 500 samples for data set \DataDonutA~and each 1,000 for the other three data sets. For simplicity we chose not to use explicit regularisation, i.e. $\mathcal R = 0$, in all numerical examples. Code to reproduce the numerical experiments is available on the University of Cambridge repository under \url{https://doi.org/10.17863/CAM.43231}.

\subsection{Comparison of Optimal Control Inspired Methods}%
We start by comparing qualitative and quantitative properties of four different methods. These are: 1) the standard neural network approach (\eqref{eq:feedforward} with \eqref{eq:deeplearning:f}), 2) the ResNet (\eqref{Dconstraint} with \eqref{eq:deeplearning:f}), 3) the ODENet (\eqref{Dconstraint} with \eqref{eq:odenet:f}) and 4) the ODENet with simplex constraint \eqref{eq:simplex} on the varying time steps. Throughout this subsection we consider networks with 20 layers.

\begin{figure}[t]
\renewcommand{\PlotIm}[1]{\includegraphics[width=\PicWidth, clip, trim=6cm 4cm 8cm 4cm]{results_spiral2d_20layers/#1}}%

\begin{tikzpicture}
\PlotRow{0}{Net}
{Net_squared_seed1_video_01}
{Net_squared_seed1_video_03}
{Net_squared_seed1_video_09}
{Net_squared_seed1_video_21}
\PlotRow{1}{ResNet}
{ResNet_squared_seed1_video_01}
{ResNet_squared_seed1_video_08}
{ResNet_squared_seed1_video_15}
{ResNet_squared_seed1_video_21}
\PlotRow{2}{ODENet}
{ODENet_squared_seed1_video_01}
{ODENet_squared_seed1_video_05}
{ODENet_squared_seed1_video_14}
{ODENet_squared_seed1_video_21}
\PlotRow{3}{ODENet+Simplex}
{ODENetSimplex_squared_seed1_video_01}
{ODENetSimplex_squared_seed1_video_02}
{ODENetSimplex_squared_seed1_video_15}
{ODENetSimplex_squared_seed1_video_21}
\PlotLabels{time 0}{}{}{time $T$}
\end{tikzpicture}
\caption{Snap shots of transformation of features for data set \DataSpiral.} \label{FIG:M:TRANSFORMATION}
\end{figure}

\subsubsection{Qualitative Comparison}%
We start with a qualitative comparison of the prediction performance of the four methods on \DataDonutA~and~\DataSpiral, see Figure~\ref{FIG:M:QUAL}. The top rows of both figures show the prediction performance of the learned parameters. The data is plotted as dots in the foreground and the learned classification in the background. A good classification has the blue dots in the dark blue areas and similarly for red. We can see that for both data sets Net classifies only a selection of the points correctly whereas the other three methods do rather well on almost all points. Note that the shape of the learned classifier is still rather different despite them being very similar in the area of the training data. 

For the bottom rows of both figures we split the classification into the transformation and a linear classification. The transformation is the evolution of the ODE for ResNet, ODENet and ODENet+simplex. For Net this is the recursive formula~\eqref{eq:feedforward}. Note that the learned transformations are very different for the four different methods.

\begin{figure}[t]
\begin{tikzpicture}
\PlotRow{0}{prediction}
{results_fixedW_donut1d_20layers/Net_squared_seed1_prediction}
{results_fixedW_donut1d_20layers/ResNet_squared_seed1_prediction}
{results_fixedW_donut1d_20layers/ODENet_squared_seed1_prediction}
{results_fixedW_donut1d_20layers/ODENetSimplex_squared_seed1_prediction}
\PlotRow{1}{transformation}
{results_fixedW_donut1d_20layers/Net_squared_seed1_transformed}
{results_fixedW_donut1d_20layers/ResNet_squared_seed1_transformed}
{results_fixedW_donut1d_20layers/ODENet_squared_seed1_transformed}
{results_fixedW_donut1d_20layers/ODENetSimplex_squared_seed1_transformed}
\PlotLabels{Net}{ResNet}{ODENet}{ODENet+Simplex}
\end{tikzpicture}

\vspace*{2mm}

\begin{tikzpicture}
\PlotRow{0}{prediction}
{results_fixedW_spiral2d_20layers/Net_squared_seed1_prediction}
{results_fixedW_spiral2d_20layers/ResNet_squared_seed1_prediction}
{results_fixedW_spiral2d_20layers/ODENet_squared_seed1_prediction}
{results_fixedW_spiral2d_20layers/ODENetSimplex_squared_seed1_prediction}
\PlotRow{1}{transformation}
{results_fixedW_spiral2d_20layers/Net_squared_seed1_transformed}
{results_fixedW_spiral2d_20layers/ResNet_squared_seed1_transformed}
{results_fixedW_spiral2d_20layers/ODENet_squared_seed1_transformed}
{results_fixedW_spiral2d_20layers/ODENetSimplex_squared_seed1_transformed}
\PlotLabels{Net}{ResNet}{ODENet}{ODENet+Simplex}
\end{tikzpicture}
\caption{Learned transformation with fixed classifier for data set \DataDonutA~(top) and \DataSpiral~(bottom).} \label{FIG:M:TRANSFORMATION:CLASSFIXED}
\end{figure}
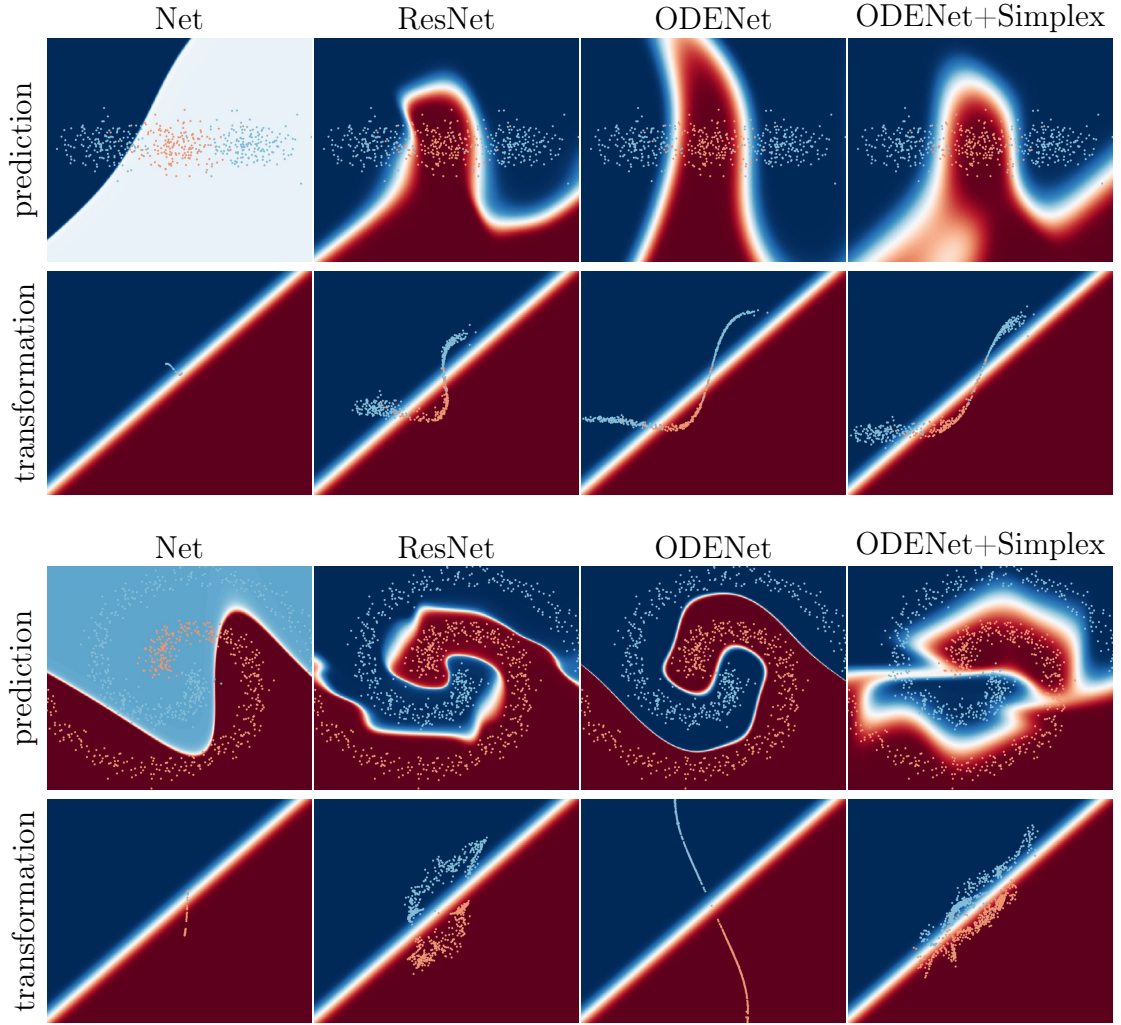

\subsubsection{Evolution of Features}%
Figure \ref{FIG:M:TRANSFORMATION} shows the evolution of the features by the learned parameters for the data set \DataSpiral. It can be seen that all four methods result in different dynamics, Net and ODENet reduce the two dimensional point cloud to a one-dimensional string whereas ResNet and ODENet+simplex preserve their two-dimensional character. This observations seem to be characteristic as we qualitatively observed similar dynamics for other data sets and random initialisation (not shown). 

Note that the dynamics of ODENet transform the points outside the field-of-view and the decision boundary (fuzzy bright line in the background) is wider than for ResNet and ODENet+simplex.

Intuitively, a scaling of the points and a fuzzier classification is equivalent to leaving the points where they are and a sharper classification. We tested the aforementioned effect by keeping a fixed classification throughout the learning process and only learning the transformation. The results in Figure~\ref{FIG:M:TRANSFORMATION:CLASSFIXED} show that this is indeed the case.

\subsubsection{Dependence on Randomness}%
We tested the dependence of our results on different random initialisations. For conciseness we only highlight one result in Figure~\ref{FIG:SEED}. Indeed, the two rows which correspond to two different random initialisations show very similar topological behaviour.

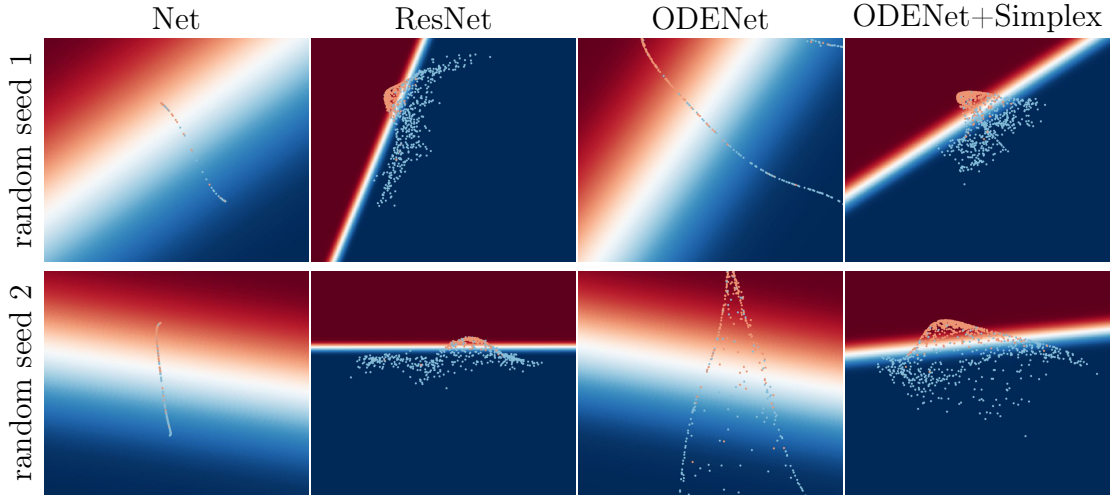
\begin{figure}
\begin{tikzpicture}
\PlotRow{0}{random seed 1}
{results_donut2d_20layers/Net_squared_seed1_transformed}
{results_donut2d_20layers/ResNet_squared_seed1_transformed}
{results_donut2d_20layers/ODENet_squared_seed1_transformed}
{results_donut2d_20layers/ODENetSimplex_squared_seed1_transformed}
\PlotRow{1}{random seed 2}
{results_donut2d_20layers/Net_squared_seed2_transformed}
{results_donut2d_20layers/ResNet_squared_seed2_transformed}
{results_donut2d_20layers/ODENet_squared_seed2_transformed}
{results_donut2d_20layers/ODENetSimplex_squared_seed2_transformed}
\PlotLabels{Net}{ResNet}{ODENet}{ODENet+Simplex}
\end{tikzpicture}%
\caption{Robustness on random initialisation for transformed data \DataDonutB~and linear classifier for two different initialisations.} \label{FIG:SEED}
\end{figure}

\subsubsection{Quantitative Results}
Quantitative results are presented in Figures~\ref{FIG:M:FUNCTION} and~\ref{FIG:M:ACCURACY} which show the evolution of function values and the classification accuracy over the course of the gradient descent iterations. The solid lines are for the training data and dashed for the test data, which is an independent draw from the same distribution and of the same size as the training data.

We can see that Net does not perform as well for any of the data sets than the other three methods. Consistently, ODENet is initially the fastest but at later stages ResNet overtakes it. All three methods seem to converge to a similar function value. As the dashed line follows essentially the solid line we can observe that there is not much overfitting taking place.

\begin{figure}
\centering
\includegraphics[width=7cm]{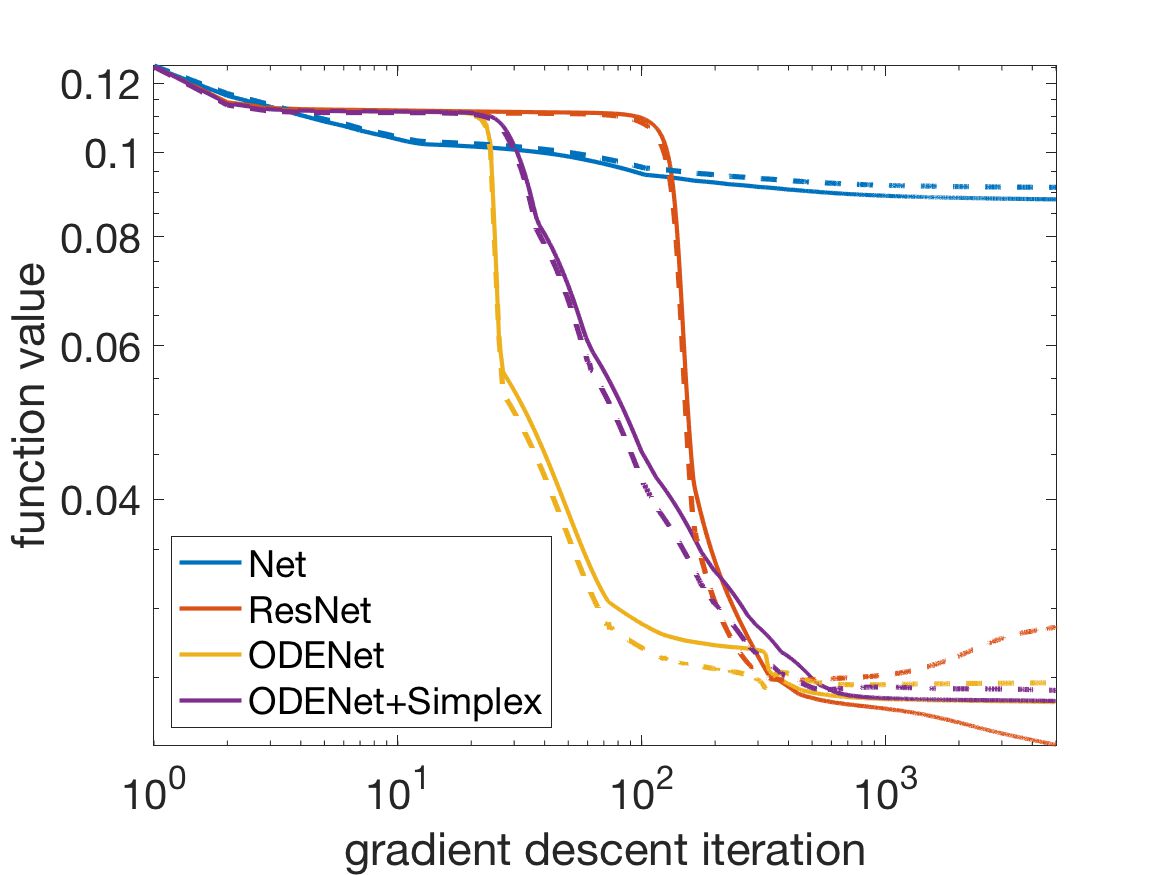}%
\includegraphics[width=7cm]{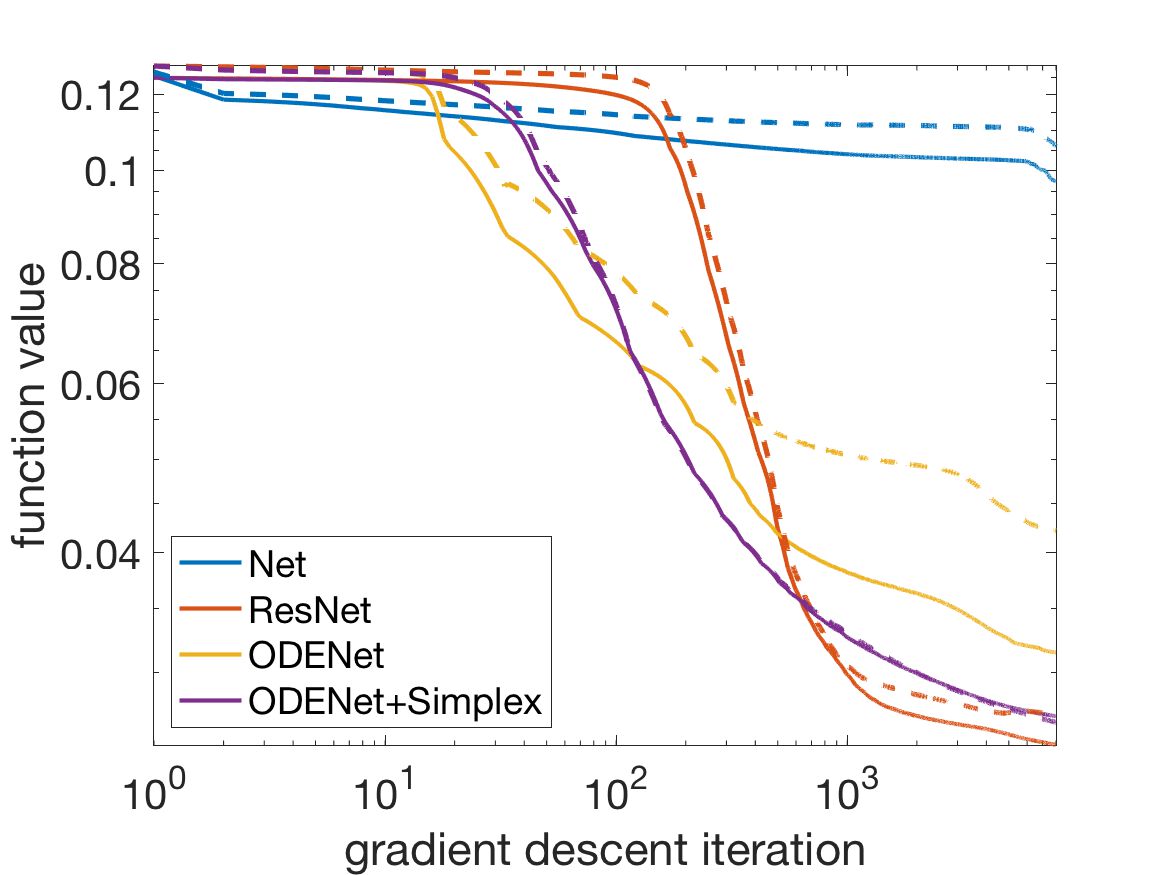}\\
\includegraphics[width=7cm]{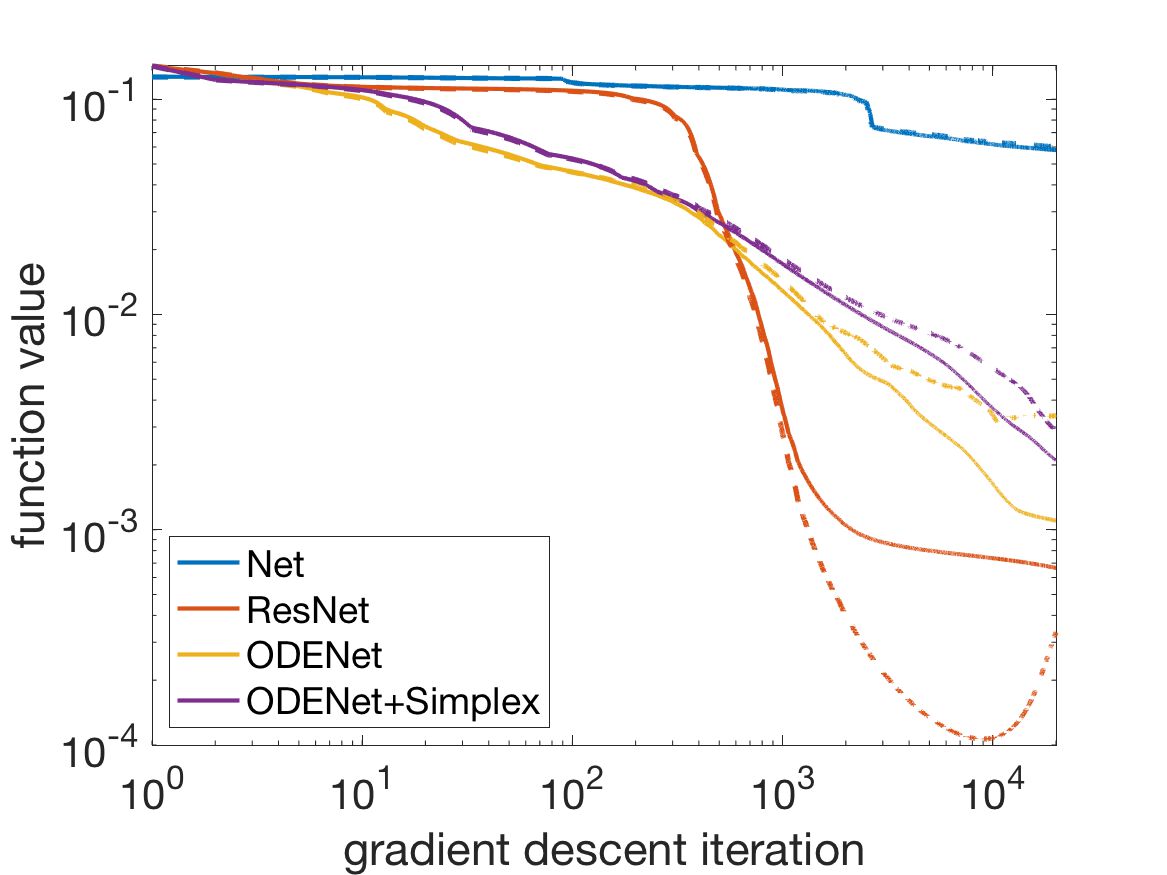}%
\includegraphics[width=7cm]{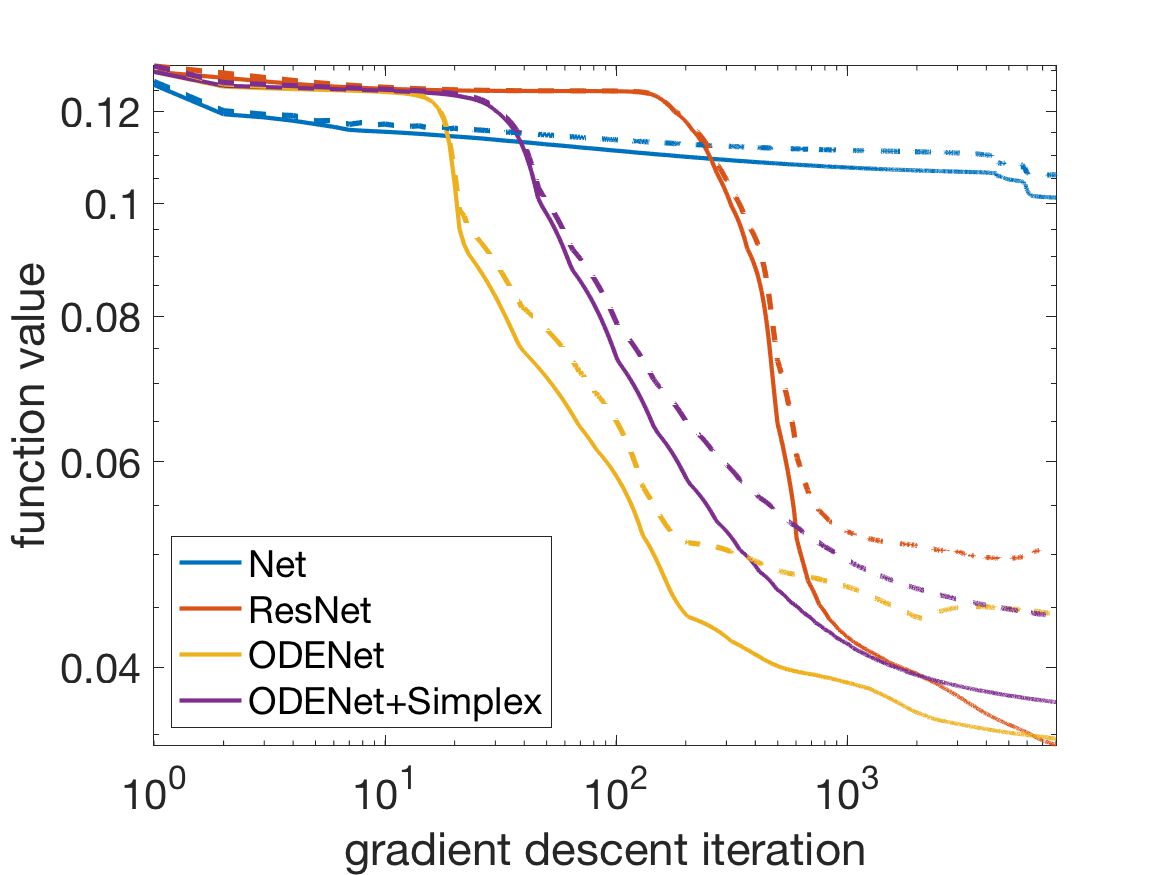}
\caption{Function values over the course of the gradient descent iterations for data sets \DataDonutA, \DataDonutB, \DataSpiral, \DataSquares~(left to right and top to bottom). The solid line represents training and the dashed line test data.}
\label{FIG:M:FUNCTION}
\end{figure}

\begin{figure}
\centering
\includegraphics[width=7cm]{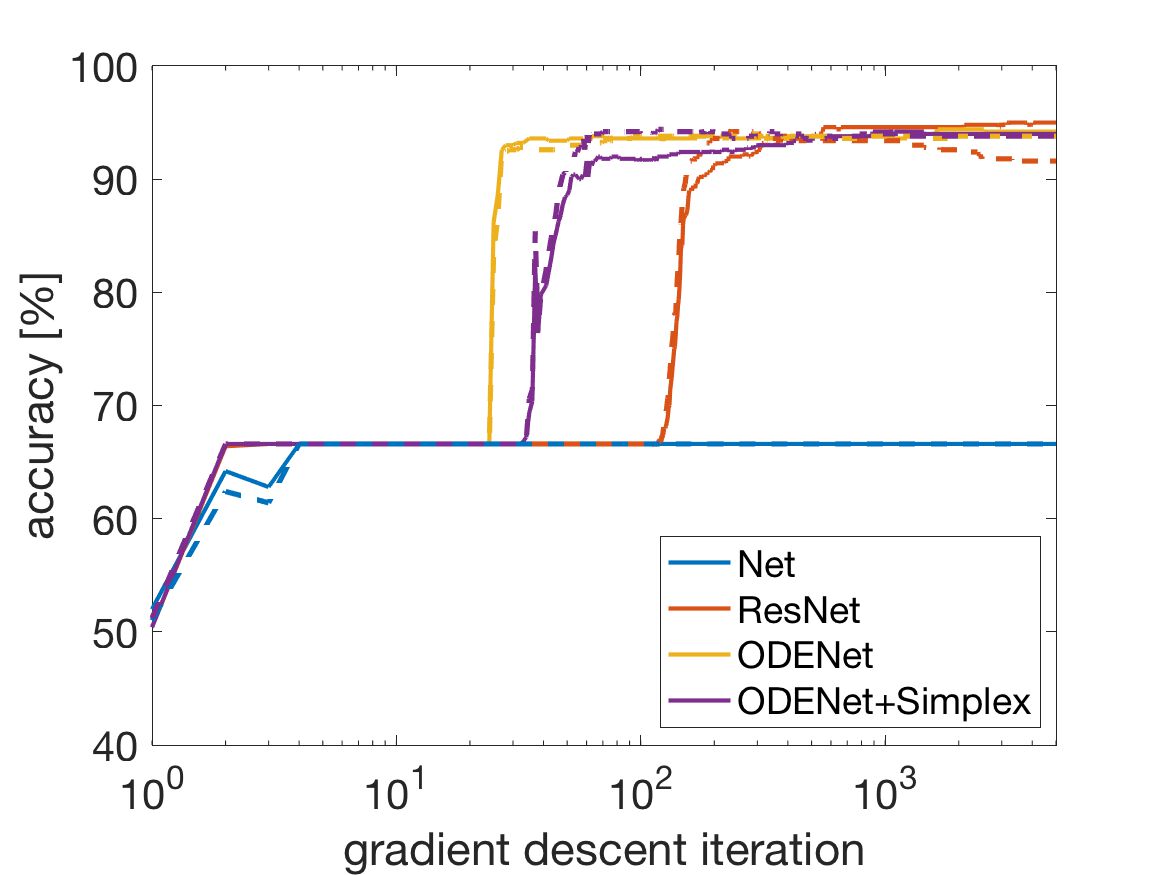}%
\includegraphics[width=7cm]{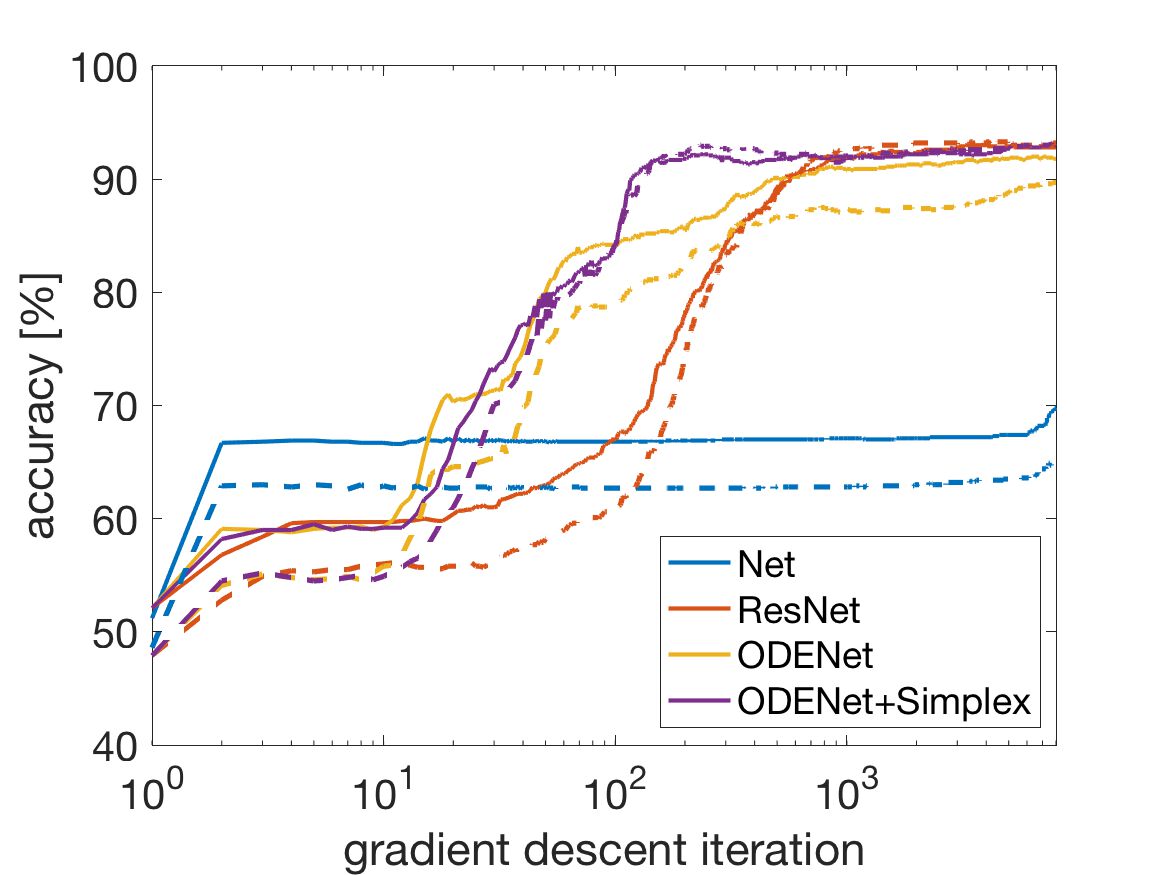}\\
\includegraphics[width=7cm]{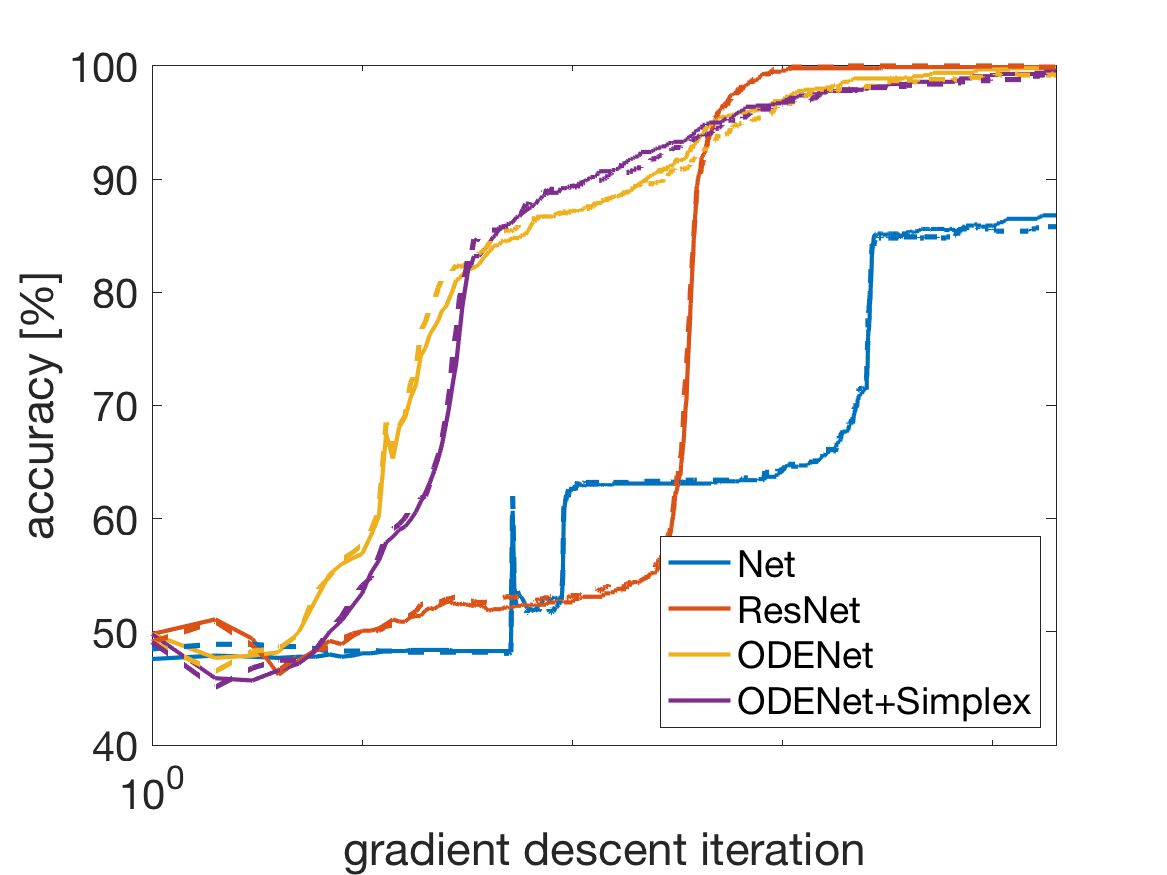}%
\includegraphics[width=7cm]{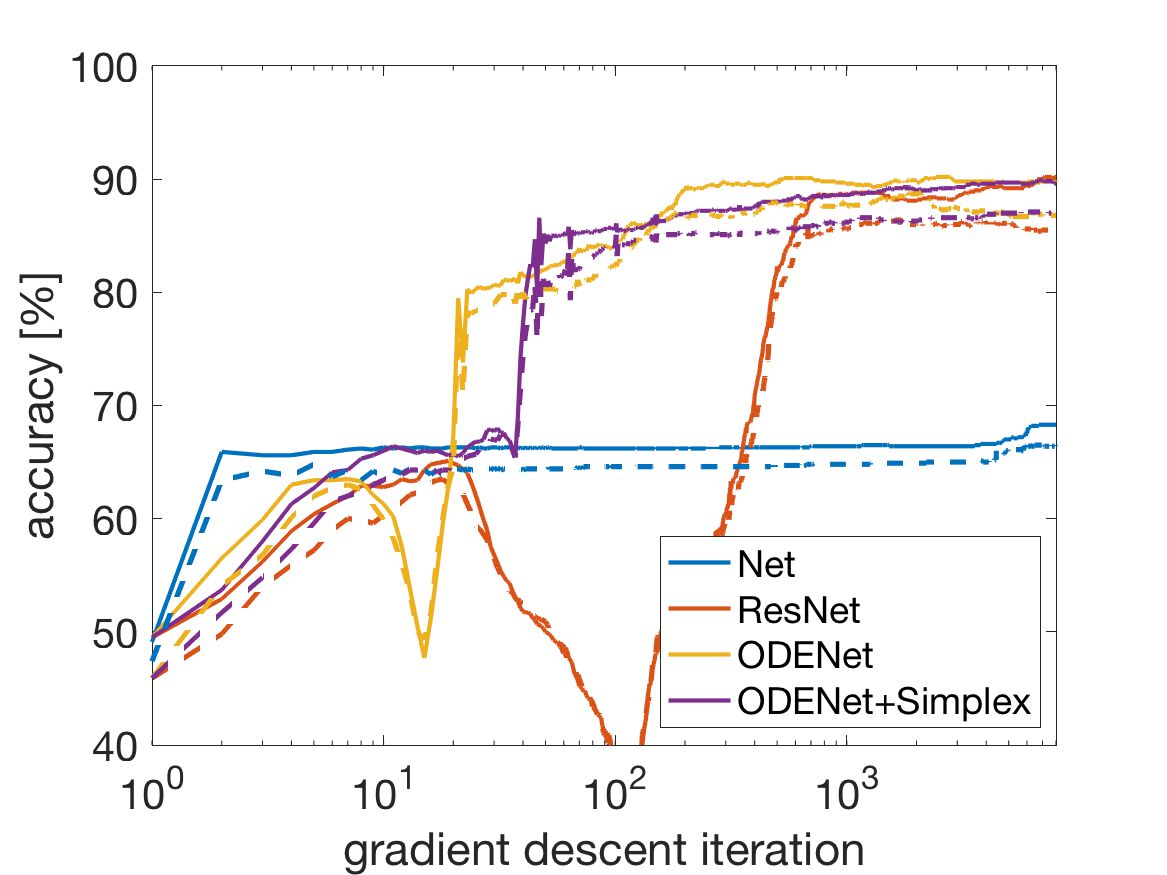}
\caption{Classification accuracy over the course of the gradient descent iterations for data sets \DataDonutA, \DataDonutB, \DataSpiral, \DataSquares~(left to right and top to bottom). The solid line represents training and the dashed line test data.}\label{FIG:M:ACCURACY}
\end{figure}

\subsubsection{Estimation of Varying Time Steps}
Figure \ref{FIG:M:TIME} shows the (estimated) time steps for ResNet/Euler, ODENet and ODENet+simplex. While ResNet uses equidistant time discretisation, ODENet and ODENet+simplex learn these as part of the training. In addition, ODENet+simplex use a simplex constraint on these values which allow the interpretation as varying time steps. It can be seen consistently for all four data sets that ODENet chooses both negative and positive time steps and these are generally of larger magnitude than the other two methods. Moreover, these are all non-zero. In contrast, ODENet+Simplex picks a few time steps (two or three) and sets the rest to zero. Sparse time steps have the advantage that less memory is needed to store this network and that less computation is needed for classification at test time.

Although it might seem unnatural to allow for negative time steps in this setting, a benefit is that this adds to the flexibility of the approach. It should also be noted that negative steps are rather common in the design of e.g. splitting and composition methods from the ODE literature, \cite{blanes05otn}.

\begin{figure}
\centering
\includegraphics[width=7cm]{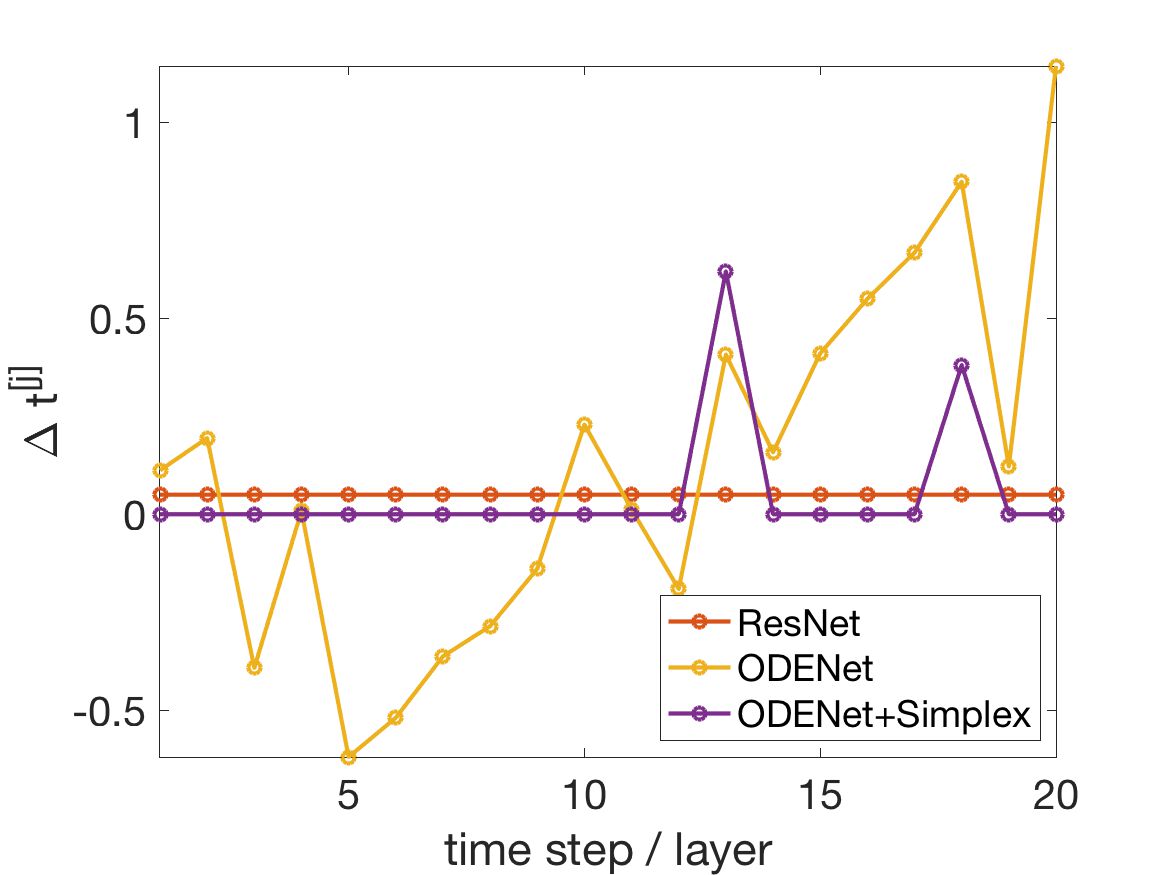}%
\includegraphics[width=7cm]{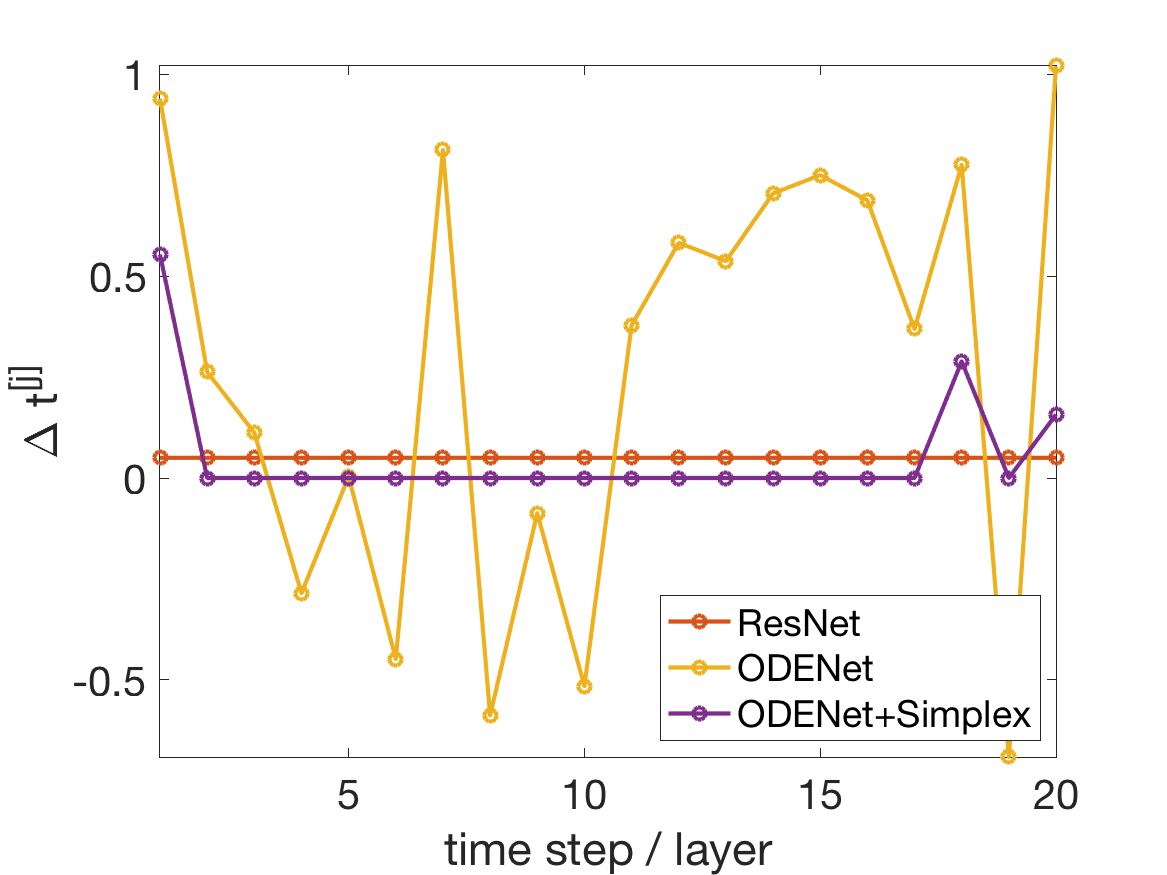}\\
\includegraphics[width=7cm]{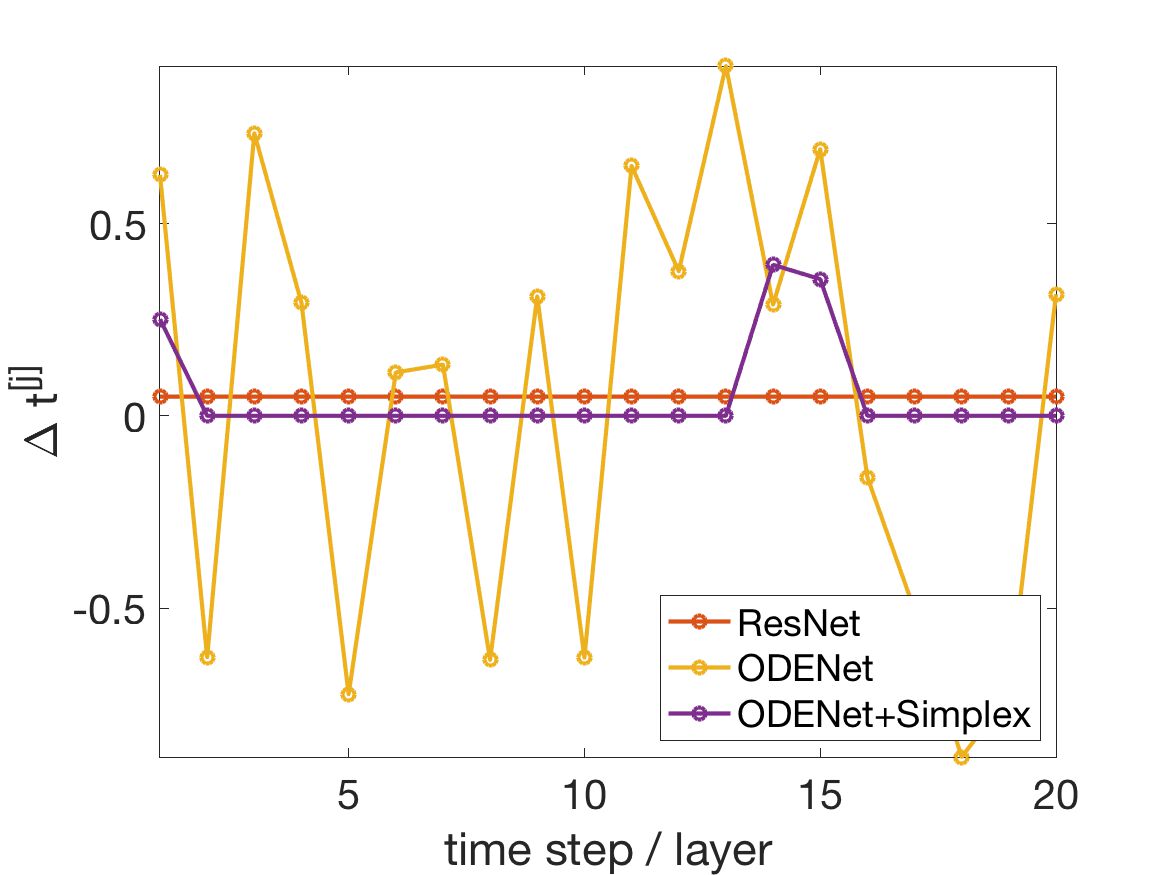}%
\includegraphics[width=7cm]{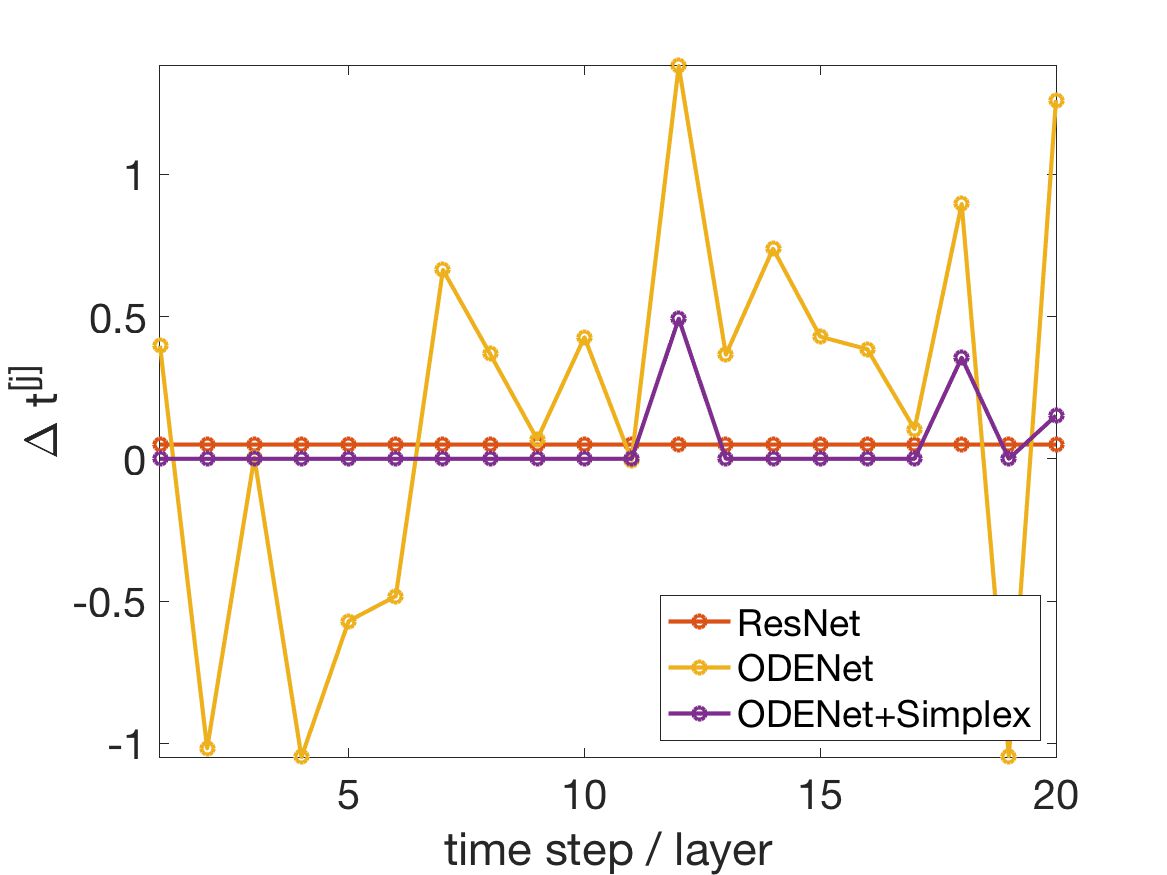}
\caption{Estimated time steps by ResNet/Euler, ODENet and ODENet+simplex for for data sets \DataDonutA, \DataDonutB, \DataSpiral, \DataSquares~(left to right and top to bottom). ODENet+simplex consistently picks two to three time steps and set the rest to zero.}\label{FIG:M:TIME}
\end{figure}

\subsection{Comparing different explicit Runge--Kutta architectures}
We are here showing results for 4 different explicit Runge--Kutta schemes of orders 1--4, their Butcher tableaux are given in Table~\ref{RKMethods}.

\begin{table}
\newcommand{\mipawi}{0.23}
\newcommand{\morespacearray}{\renewcommand{\arraystretch}{1.3}}
\begin{minipage}[htp]{\mipawi\textwidth}
$$
\morespacearray
\begin{array}{c|c}
0 &   \\ \hline
   & 1
\end{array}
$$
\end{minipage}
\begin{minipage}{\mipawi\textwidth}
$$
\morespacearray
\begin{array}{c|cc}
0 &   \\ 
 1  & 1 \\ \hline
 &\frac12 & \frac12
\end{array}
$$
\end{minipage}
\begin{minipage}{\mipawi\textwidth}
$$
\morespacearray
\begin{array}{r|rrr}
0            &   \\ 
 \frac12  & \frac12 \\ 
 1           & -1         & 2 \\ \hline
 &\frac16&\frac23  & \frac16
\end{array}
$$
\end{minipage}
\begin{minipage}{\mipawi\textwidth}
$$
\morespacearray
\begin{array}{r|rrrr}
0            &  \\ 
 \frac12  & \frac12 \\ 
 \frac12  &     0      & \frac12 \\
 1           &     0       &     0      & 1  \\ \hline
 &\frac16&\frac13   & \frac13 &  \frac16
\end{array}
$$
\end{minipage}
\caption{Four explicit Runge--Kutta methods: ResNet/Euler, Improved Euler, Kutta(3) and Kutta(4). \label{RKMethods}}
\end{table}
The first two methods are the Euler and Improved Euler methods over orders one and two respectively. The other two are due to Kutta \cite{Kutta1901} and have convergence orders three and four. The presented results are obtained with the data sets \DataDonutA,\ \DataDonutB,\ \DataSpiral,\ and \DataSquares. In the results reported here we have taken the number of layers to be 15.
In Figure \ref{pred-trans} we illustrate the initial and final configurations of the data points for the learned parameters. The blue and red background colours can be thought of as test data results in the upper row of plots. For instance, any point which was originally in a red area will be classified as red with high probability. Similarly, the background colours in the bottom row of plots show the classification of points which have been transformed to a given location. In the transition between red and blue the classification will have less certainty.

In Figures \ref{anim-spiral2d}--\ref{anim-squares2d}, more details of the transition are shown. The leftmost and rightmost plot show the initial and final states respectively, whereas the two in the middle show the transformation in layers 5 and 10. The background colours always show the same and correspond to the final state.

Finally, in Figure~\ref{FIG:RK:FUNCTION} we show the progress of the gradient descent method over 10,000 iterations for each of the four data sets.

\begin{figure}
\def\RowHeight{2.9cm}
\renewcommand{\PlotIm}[1]{{\includegraphics[width=\PicWidth, clip, trim=3cm 1.5cm 2cm 1cm]{RKfigs/#1}}}%

\begin{tikzpicture}
\PlotRow{0}{prediction}
{Ds=spiral2d-M=Euler-ch=2-nl=15-nd=1000_ini}
{Ds=spiral2d-M=ImprovedEuler-ch=2-nl=15-nd=1000_ini}
{Ds=spiral2d-M=kutta3-ch=2-nl=15-nd=1000_ini}
{Ds=spiral2d-M=kutta4-ch=2-nl=15-nd=1000_ini}
\PlotRow{1}{transformation}
{Ds=spiral2d-M=Euler-ch=2-nl=15-nd=1000_transf}
{Ds=spiral2d-M=ImprovedEuler-ch=2-nl=15-nd=1000_transf}
{Ds=spiral2d-M=kutta3-ch=2-nl=15-nd=1000_transf}
{Ds=spiral2d-M=kutta4-ch=2-nl=15-nd=1000_transf}
\PlotLabels{ResNet/Euler}{Improved Euler}{Kutta(3)}{Kutta(4)}
\end{tikzpicture}

\begin{tikzpicture}
\PlotRow{0}{prediction}
{Ds=donut2d-M=Euler-ch=2-nl=15-nd=1000_ini}
{Ds=donut2d-M=ImprovedEuler-ch=2-nl=15-nd=1000_ini}
{Ds=donut2d-M=kutta3-ch=2-nl=15-nd=1000_ini}
{Ds=donut2d-M=kutta4-ch=2-nl=15-nd=1000_ini}
\PlotRow{1}{transformation}
{Ds=donut2d-M=Euler-ch=2-nl=15-nd=1000_transf}
{Ds=donut2d-M=ImprovedEuler-ch=2-nl=15-nd=1000_transf}
{Ds=donut2d-M=kutta3-ch=2-nl=15-nd=1000_transf}
{Ds=donut2d-M=kutta4-ch=2-nl=15-nd=1000_transf}
\PlotLabels{ResNet/Euler}{Improved Euler}{Kutta(3)}{Kutta(4)}
\end{tikzpicture}

\begin{tikzpicture}
\PlotRow{0}{prediction}
{Ds=squares2d-M=Euler-ch=2-nl=15-nd=1000_ini}
{Ds=squares2d-M=ImprovedEuler-ch=2-nl=15-nd=1000_ini}
{Ds=squares2d-M=kutta3-ch=2-nl=15-nd=1000_ini}
{Ds=squares2d-M=kutta4-ch=2-nl=15-nd=1000_ini}
\PlotRow{1}{transformation}
{Ds=squares2d-M=Euler-ch=2-nl=15-nd=1000_transf}
{Ds=squares2d-M=ImprovedEuler-ch=2-nl=15-nd=1000_transf}
{Ds=squares2d-M=kutta3-ch=2-nl=15-nd=1000_transf}
{Ds=squares2d-M=kutta4-ch=2-nl=15-nd=1000_transf}
\PlotLabels{ResNet/Euler}{Improved Euler}{Kutta(3)}{Kutta(4)}
\end{tikzpicture}
\caption{Learned prediction and transformation for different Runge--Kutta methods and data sets \DataSpiral~(top), \DataDonutB~(centre) and \DataSquares~(bottom). All results are for 15 layers.} \label{pred-trans}
\end{figure}

\begin{figure}
\def\RowHeight{3cm}
\renewcommand{\PlotIm}[1]{{\includegraphics[width=\PicWidth, clip, trim=3cm 1.5cm 2cm 1cm]{RKSnaps/#1}}}%

\begin{tikzpicture}
\PlotRow{0}{ResNet/Euler}
{Ds=spiral2d-M=Euler-ch=2-nl=15-nd=1000_1}
{Ds=spiral2d-M=Euler-ch=2-nl=15-nd=1000_5}
{Ds=spiral2d-M=Euler-ch=2-nl=15-nd=1000_10}
{Ds=spiral2d-M=Euler-ch=2-nl=15-nd=1000_16}
\PlotRow{1}{Improved Euler}
{Ds=spiral2d-M=ImprovedEuler-ch=2-nl=15-nd=1000_1}
{Ds=spiral2d-M=ImprovedEuler-ch=2-nl=15-nd=1000_5}
{Ds=spiral2d-M=ImprovedEuler-ch=2-nl=15-nd=1000_10}
{Ds=spiral2d-M=ImprovedEuler-ch=2-nl=15-nd=1000_16}
\PlotRow{2}{Kutta (3)}
{Ds=spiral2d-M=kutta3-ch=2-nl=15-nd=1000_1}
{Ds=spiral2d-M=kutta3-ch=2-nl=15-nd=1000_5}
{Ds=spiral2d-M=kutta3-ch=2-nl=15-nd=1000_10}
{Ds=spiral2d-M=kutta3-ch=2-nl=15-nd=1000_16}
\PlotRow{3}{Kutta (4)}
{Ds=spiral2d-M=kutta4-ch=2-nl=15-nd=1000_1}
{Ds=spiral2d-M=kutta4-ch=2-nl=15-nd=1000_5}
{Ds=spiral2d-M=kutta4-ch=2-nl=15-nd=1000_10}
{Ds=spiral2d-M=kutta4-ch=2-nl=15-nd=1000_16}
\PlotLabels{time 0}{}{}{time $T$}
\end{tikzpicture}
\caption{Snap shots of the transition from initial to final state through the network with the data set \DataSpiral.} \label{anim-spiral2d}
\end{figure}

\begin{figure}
\def\RowHeight{3cm}
\renewcommand{\PlotIm}[1]{{\includegraphics[width=\PicWidth, clip, trim=3cm 1.5cm 2cm 1cm]{RKSnaps/#1}}}%

\begin{tikzpicture}
\PlotRow{0}{ResNet/Euler}
{Ds=donut2d-M=Euler-ch=2-nl=15-nd=1000_1}
{Ds=donut2d-M=Euler-ch=2-nl=15-nd=1000_5}
{Ds=donut2d-M=Euler-ch=2-nl=15-nd=1000_10}
{Ds=donut2d-M=Euler-ch=2-nl=15-nd=1000_16}
\PlotRow{1}{Improved Euler}
{Ds=donut2d-M=ImprovedEuler-ch=2-nl=15-nd=1000_1}
{Ds=donut2d-M=ImprovedEuler-ch=2-nl=15-nd=1000_5}
{Ds=donut2d-M=ImprovedEuler-ch=2-nl=15-nd=1000_10}
{Ds=donut2d-M=ImprovedEuler-ch=2-nl=15-nd=1000_16}
\PlotRow{2}{Kutta (3)}
{Ds=donut2d-M=kutta3-ch=2-nl=15-nd=1000_1}
{Ds=donut2d-M=kutta3-ch=2-nl=15-nd=1000_5}
{Ds=donut2d-M=kutta3-ch=2-nl=15-nd=1000_10}
{Ds=donut2d-M=kutta3-ch=2-nl=15-nd=1000_16}
\PlotRow{3}{Kutta (4)}
{Ds=donut2d-M=kutta4-ch=2-nl=15-nd=1000_1}
{Ds=donut2d-M=kutta4-ch=2-nl=15-nd=1000_5}
{Ds=donut2d-M=kutta4-ch=2-nl=15-nd=1000_10}
{Ds=donut2d-M=kutta4-ch=2-nl=15-nd=1000_16}
\PlotLabels{time 0}{}{}{time $T$}
\end{tikzpicture}
\caption{Snap shots of the transition from initial to final state through the network with the data set \DataDonutB.} \label{anim-donut2d}
\end{figure}

\begin{figure}
\def\RowHeight{3cm}
\renewcommand{\PlotIm}[1]{{\includegraphics[width=\PicWidth, clip, trim=3cm 1.5cm 2cm 1cm]{RKSnaps/#1}}}%

\begin{tikzpicture}
\PlotRow{0}{ResNet/Euler}
{Ds=squares2d-M=Euler-ch=2-nl=15-nd=1000_1}
{Ds=squares2d-M=Euler-ch=2-nl=15-nd=1000_5}
{Ds=squares2d-M=Euler-ch=2-nl=15-nd=1000_10}
{Ds=squares2d-M=Euler-ch=2-nl=15-nd=1000_16}
\PlotRow{1}{Improved Euler}
{Ds=squares2d-M=ImprovedEuler-ch=2-nl=15-nd=1000_1}
{Ds=squares2d-M=ImprovedEuler-ch=2-nl=15-nd=1000_5}
{Ds=squares2d-M=ImprovedEuler-ch=2-nl=15-nd=1000_10}
{Ds=squares2d-M=ImprovedEuler-ch=2-nl=15-nd=1000_16}
\PlotRow{2}{Kutta (3)}
{Ds=squares2d-M=kutta3-ch=2-nl=15-nd=1000_1}
{Ds=squares2d-M=kutta3-ch=2-nl=15-nd=1000_5}
{Ds=squares2d-M=kutta3-ch=2-nl=15-nd=1000_10}
{Ds=squares2d-M=kutta3-ch=2-nl=15-nd=1000_16}
\PlotRow{3}{Kutta (4)}
{Ds=squares2d-M=kutta4-ch=2-nl=15-nd=1000_1}
{Ds=squares2d-M=kutta4-ch=2-nl=15-nd=1000_5}
{Ds=squares2d-M=kutta4-ch=2-nl=15-nd=1000_10}
{Ds=squares2d-M=kutta4-ch=2-nl=15-nd=1000_16}
\PlotLabels{time 0}{}{}{time $T$}
\end{tikzpicture}
\caption{Snap shots of the transition from initial to final state through the network with the data set \DataSquares.} \label{anim-squares2d}
\end{figure}

\begin{figure}
\centering
\includegraphics[width=7cm]{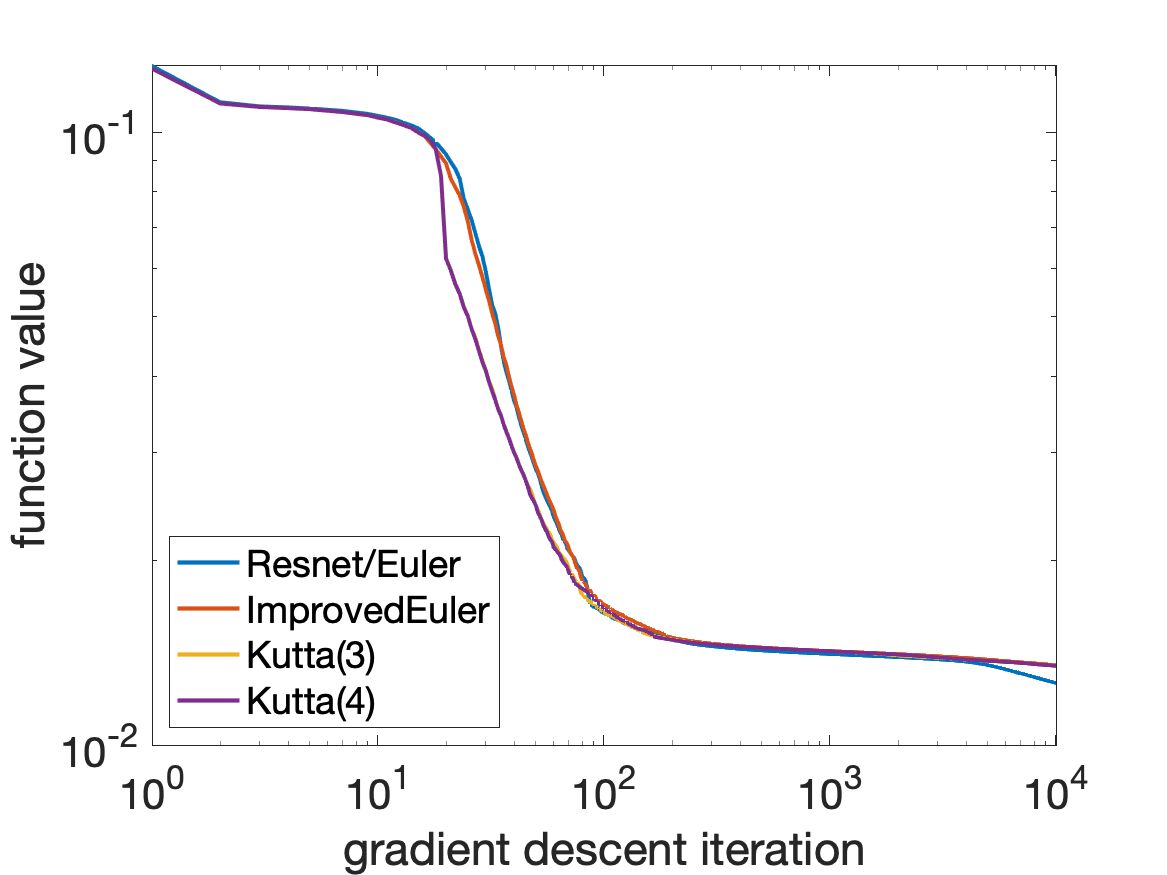}%
\includegraphics[width=7cm]{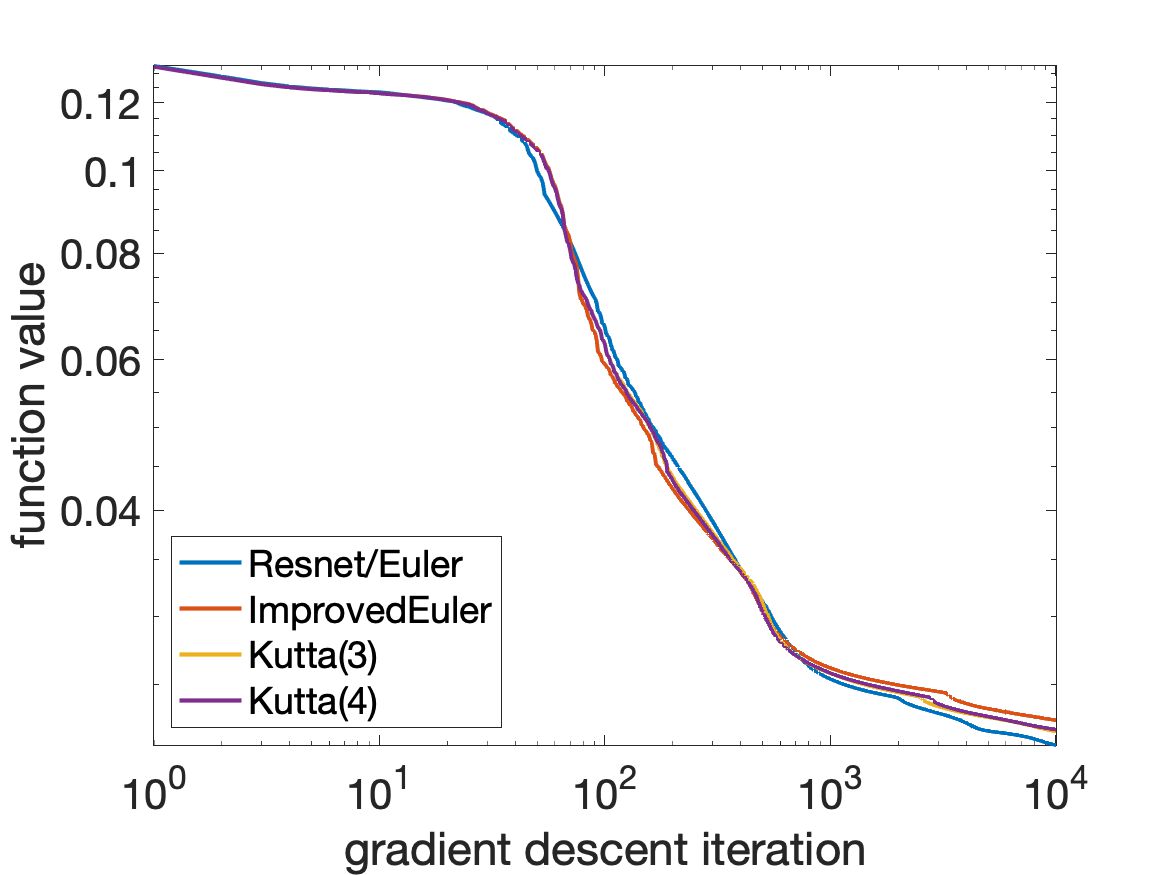}\\
\includegraphics[width=7cm]{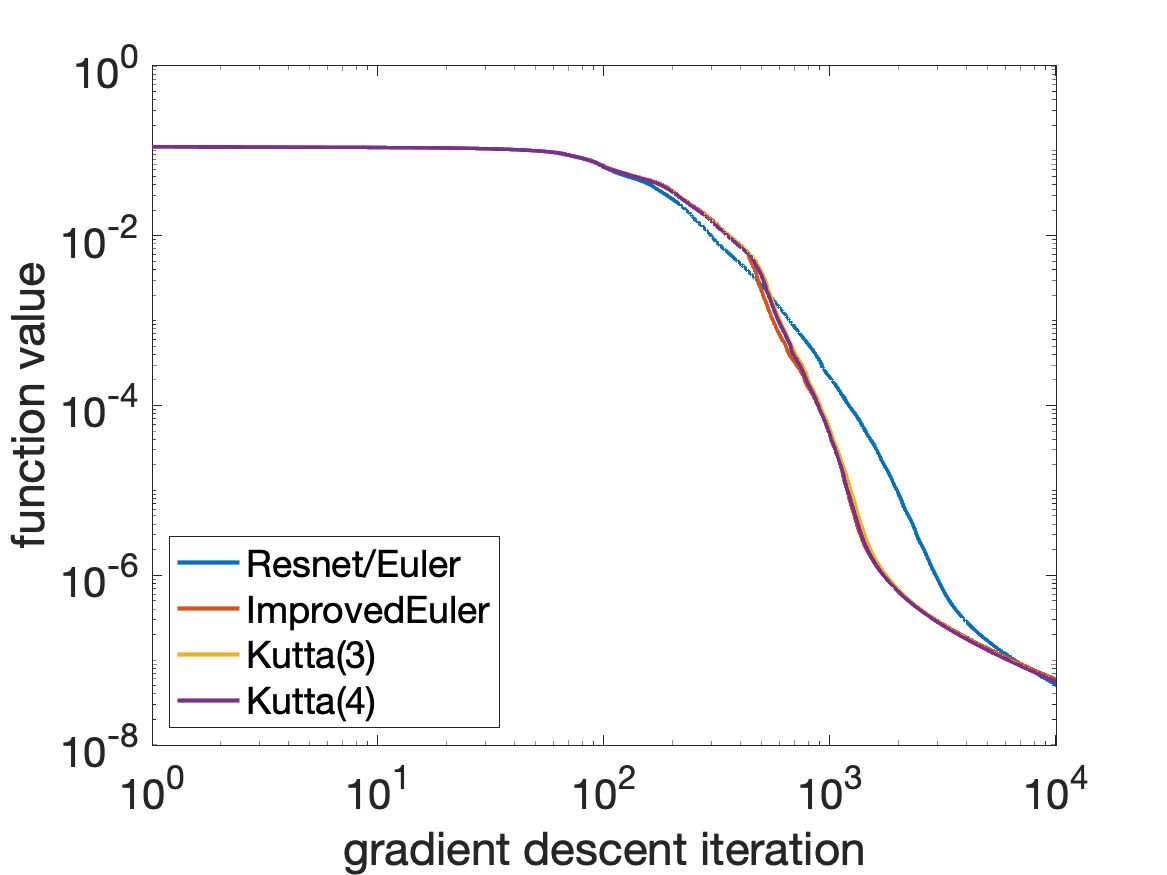}%
\includegraphics[width=7cm]{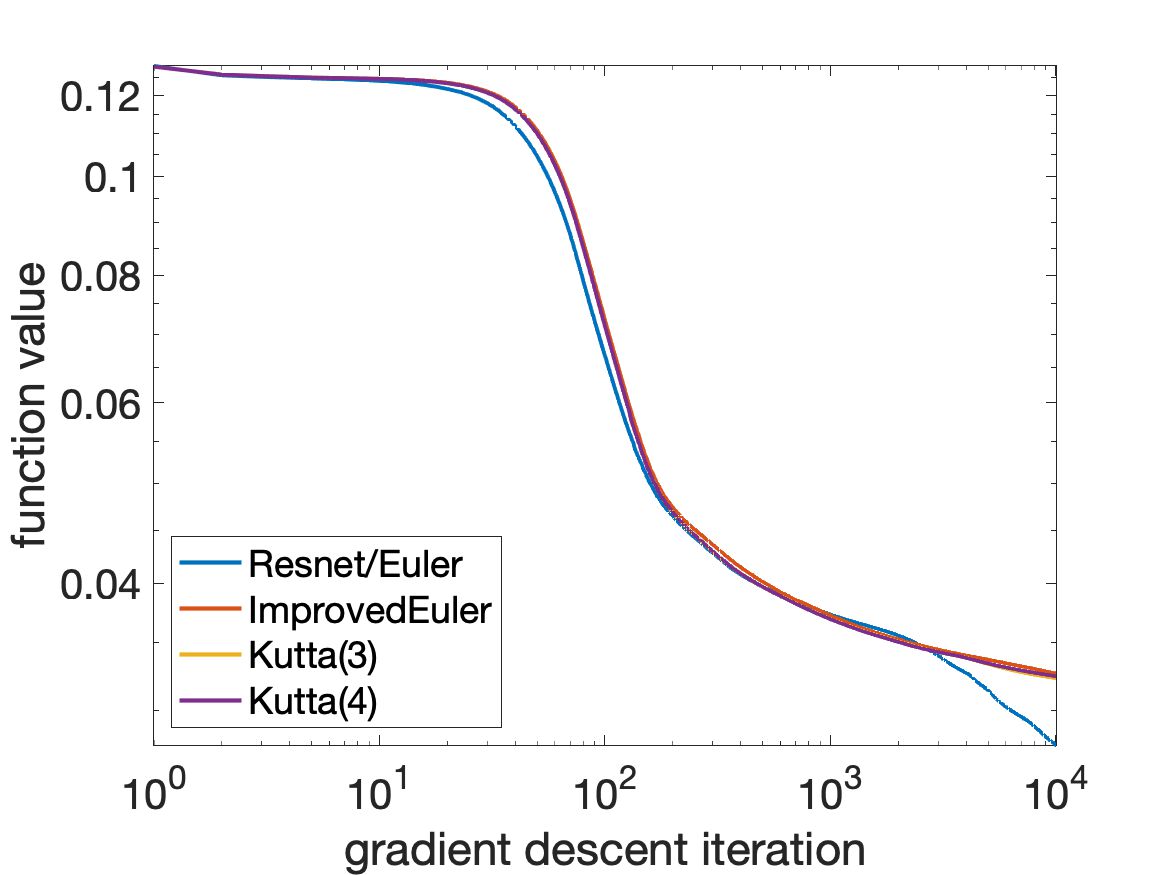}
\caption{Function values over the course of the gradient descent iterations for data sets \DataDonutA, \DataDonutB, \DataSpiral, \DataSquares~(left to right and top to bottom). }\label{FIG:RK:FUNCTION}
\end{figure} 

\subsection{Digit classification with minimal data}
We test four network architectures---three of which are ODE-inspired---on digit classification. The training data is selected from the MNIST data base \cite{LeCun1998mnist} where we restrict ourselves to classifying 0s and 8s. To make this classification more challenging, we train only on 100 images and take another 500 as test data. We refer to this data as \mnist.

There are a couple of observations which can be made from the results shown in Figures \ref{FIG:M:MNIST:QUAN} and \ref{FIG:M:MNIST:QUAL}. First, as can be seen in Figure \ref{FIG:M:MNIST:QUAN}, the results are consistent with the observations made from the toy data in Figure \ref{FIG:M:ACCURACY}: the three ODE-inspired methods seem to perform very well, both on training and test data. Also the trained step sizes show similar profiles as in Figure \ref{FIG:M:TIME}, with ODENet learning negative step sizes and ODENet+Simplex learning very sparse time steps. Second, in Figure \ref{FIG:M:MNIST:QUAL}, we show the transformed test data before the classification. Interestingly, all four methods learn what looks to the human eye as adding noise. Only the ODE-inspired networks retain some of the structure of the input features.

\begin{figure}
\centering
\includegraphics[width=7cm]{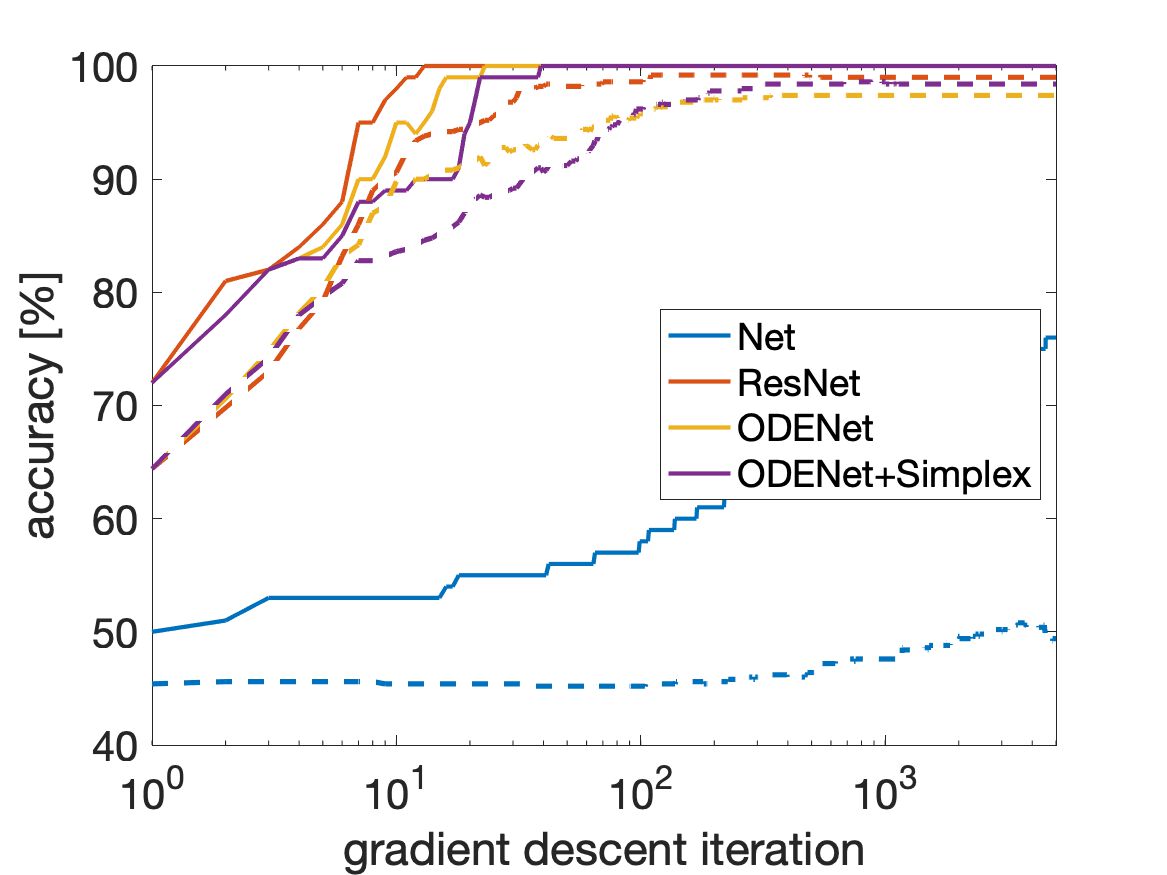}%
\includegraphics[width=7cm]{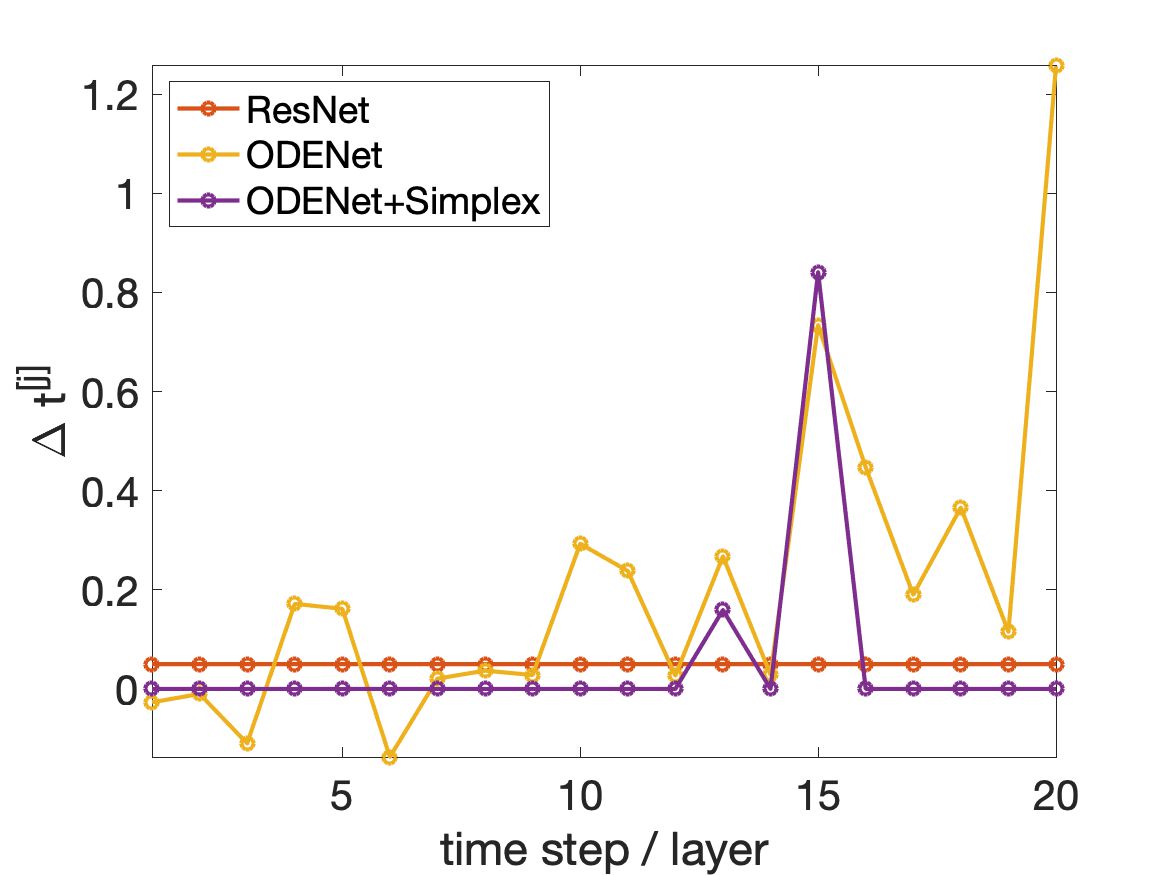}%
\caption{Accuracy (left) and time steps (right) for \mnist~dataset \cite{LeCun1998mnist}.} \label{FIG:M:MNIST:QUAN}
\end{figure}

\def\RowHeight{1.4cm}
\def\CellWidth{1.4cm}

\renewcommand\PlotLabels[4]{
\node at (0*\CellWidth,0.5*\RowHeight + 2mm) {#1};
\node at (1*\CellWidth,0.5*\RowHeight + 2mm) {#2};
\node at (2*\CellWidth,0.5*\RowHeight + 2mm) {#3};
\node at (3*\CellWidth,0.5*\RowHeight + 2mm) {#4};}

\newcommand{\addstrings}[3]{#1#2#3}

\renewcommand\PlotRow[4]{%
\node at (0*\CellWidth,-#1*\RowHeight) {\PlotIm{\addstrings{#3}{1}{#4}}};%
\node at (1*\CellWidth,-#1*\RowHeight) {\PlotIm{\addstrings{#3}{2}{#4}}};%
\node at (2*\CellWidth,-#1*\RowHeight) {\PlotIm{\addstrings{#3}{3}{#4}}};%
\node at (3*\CellWidth,-#1*\RowHeight) {\PlotIm{\addstrings{#3}{4}{#4}}};%
\node at (4*\CellWidth,-#1*\RowHeight) {\PlotIm{\addstrings{#3}{5}{#4}}};%
\node at (5*\CellWidth,-#1*\RowHeight) {\PlotIm{\addstrings{#3}{6}{#4}}};%
\node at (6*\CellWidth,-#1*\RowHeight) {\PlotIm{\addstrings{#3}{7}{#4}}};%
\node at (7*\CellWidth,-#1*\RowHeight) {\PlotIm{\addstrings{#3}{8}{#4}}};%
\node at (8*\CellWidth,-#1*\RowHeight) {\PlotIm{\addstrings{#3}{9}{#4}}};%
\node at (9*\CellWidth,-#1*\RowHeight) {\PlotIm{\addstrings{#3}{10}{#4}}};%
}%

\begin{figure}
\centering
\renewcommand{\PlotIm}[1]{{\includegraphics[width=\CellWidth]{results_mnist100_20layers/#1}}}%
\begin{tikzpicture}
\PlotRow{0}{features}{Net_squared_seed13_testing}{}
\PlotRow{1}{Net}{Net_squared_seed13_testing}{_transformed}
\PlotRow{2}{ResNet}{ResNet_squared_seed13_testing}{_transformed}
\PlotRow{3}{ODENet}{ODENet_squared_seed13_testing}{_transformed}
\PlotRow{4}{ODENet+}{ODENetSimplex_squared_seed13_testing}{_transformed}
\end{tikzpicture}
\caption{Features of testing examples from \mnist~dataset~\cite{LeCun1998mnist} and transformed features by four networks under comparison: Net, ResNet, ODENet, ODENet+Simplex (from top to bottom). All networks have 20 layers.} \label{FIG:M:MNIST:QUAL}
\end{figure}

\section{Conclusions and outlook}
In this paper we have investigated the interpretation of deep learning as an optimal control problem. In particular, we have proposed a first-optimise-then-discretise approach for the derivation of ODE-inspired deep neural networks using symplectic partitioned Runge--Kutta methods. The latter discretisation guarantees that also after discretisation the first-order optimality conditions for the optimal control problem are fulfilled. This is in particular interesting under the assumption that the learned ODE discretisation follows some underlying continuous transformation that it approximated. Using partitioned Runge--Kutta methods, we derive several new deep learning algorithms which we compare for their convergence behaviour and the transformation dynamics of the so-learned discrete ODE iterations. Interestingly, while the convergence behaviour for the solution of the optimal control problem shows differences when trained with different partitioned Runge--Kutta methods, the learned transformation given by the discretised ODE with optimised parameters shows similar characteristics. It is probably too strong of a statement to suggest that our experiments therefore support our hypothesis of an underlying continuous optimal transformation as the similar behaviour could be a consequence of other causes. However, the experiments encourage our hypothesis.

The optimal control formulation naturally lends itself to learning more parameters such as the time discretisation which can be constrained to lie in a simplex. As we have seen in Figure \ref{FIG:M:TIME}, the simplex constraint lead to sparse time steps such that the effectively only very few layers were needed to represent the dynamics, thus these networks have faster online classification performance and lower memory footprint. Another advantage of this approach is that one does not need to know precisely in advance how many layers to choose since the training procedure selects this automatically.

An interesting direction for further investigation is to use the optimal control characterisation of deep learning for studying the stability of the problem under perturbations of $Y_0$. Since the optimal control problem is equivalent to a Hamiltonian boundary value problem, we can study the stability of the first by analysing the second. One can derive conditions on $f$ and $\mathcal J$ that ensure existence and stability of the optimal solutions with respect to perturbations on the initial data, or after increasing the number of data points. For the existence of solutions of the optimal control problem and the Pontryagin maximum principle see \cite{bucy1967two, gay2011clebsch, agrachev2013control, sontag2013mathematical}.

The stability of the problem can be analysed in different ways. The first is to investigate how the parameters $u(t):=(K(t), \beta(t))$ change under change (or perturbation) of the initial data and the cost function. The equation for the momenta of the Hamiltonian boundary value problem (adjoint equation) can be used to compute sensitivities of the optimal control problem under perturbation on the initial data~\cite{sanzserna2015sympletic}. In particular, the answer to this is linked to the Hessian of the Hamiltonian with respect to the parameters $u$. If $H_{u,u}$ is invertible, see Section~\ref{sec:Hamiltonian}, and remains invertible under such perturbations, then $u=\varphi(y,p)$ can be solved for in terms of the state $y$ and the co-state $p$. 

The second is to ask how generalisable the learned parameters $u$ are. The parameters $u$, i.e. $K$ and $\beta$ determine the deformation of the data in such a way that the data becomes classifiable with the Euclidean norm at final time $T$. It would be interesting to show that $\varphi$ does not change much under perturbation, and neither do the deformations determined by $u$. 

Another interesting direction for future research is the generalisation of the optimal control problem to feature an inverse scale-space ODE as a constraint, where we do not consider the time derivative of the state variable, but of a subgradient of a corresponding convex functional with the state variable as its argument, see for example~\cite{scherzer2001inverse,burger2005nonlinear,burger2006nonlinear}. Normally these flows are discretised with an explicit or implicit Euler scheme. These discretisations can reproduce various neural network architectures~\cite[Section 9]{benning2018modern}. Hence, applying the existing knowledge of numerical discretisation methods in a deep learning context may lead to a better and more systematic way of developing new architectures with desirable properties.

Other questions revolve around the sensitivity in the classification error. How can we estimate the error in the classification once the parameters are learned? Given $u$ obtained solving the optimal control problem, if we change (or update) the set of features, how big is the error in the classification $\mathcal J(y^{[N]})$?

\subsection*{Acknowledgments:} MB acknowledges support from the Leverhulme Trust Early Career Fellowship ECF-2016-611 'Learning from mistakes: a supervised feedback-loop for imaging applications'. CBS acknowledges support from the Leverhulme Trust project on Breaking the non-convexity barrier, the Philip Leverhulme Prize, the EPSRC grant No. EP/M00483X/1, the EPSRC Centre No. EP/N014588/1, the European Union Horizon 2020 research and innovation programmes under the Marie Skodowska-Curie grant agreement No. 777826 NoMADS and No. 691070 CHiPS, the Cantab Capital Institute for the Mathematics of Information and the Alan Turing Institute. We gratefully acknowledge the support of NVIDIA Corporation with the donation of a Quadro P6000 and a Titan Xp GPU used for this research. EC and BO thank the SPIRIT project (No. 231632) under the Research Council of Norway FRIPRO funding scheme. The authors would like to thank the Isaac Newton Institute for Mathematical Sciences, Cambridge, for support and hospitality during the programmes \emph{Variational methods and effective algorithms for imaging and vision (2017)} and \emph{Geometry, compatibility and structure preservation in computational differential equations (2019)} where work on this paper was undertaken. This work was supported by EPSRC grant No. EP/K032208/1.

\begin{appendices}
\section{Discrete necessary optimality conditions}\label{DNOC}
We prove Proposition \ref{necdiscreteoptimality} for the general symplectic partitioned Runge--Kutta method. 
\begin{proof}[Proof of Proposition \ref{necdiscreteoptimality}] 
We introduce Lagrangian multipliers $p_i^{[j]}$, $p^{[j+1]}$ and 
consider the Lagrangian
{\small
\begin{equation}
\mathcal{L}=\mathcal{L}\left(\{y^{[j]}\}_{j=1}^N,\{y_i^{[j]}\}_{j=1}^N,\{u_i^{[j]}\}_{j=0}^{N-1},\{p^{[j+1]}\}_{j=0}^{N-1},\{p_i^{[j]}\}_{j=0}^{N-1}\right)
\end{equation}
}
$i=1,\dots , s$,
\begin{eqnarray*}
\mathcal{L}&:=&\mathcal{J}(y^{[N]})-\Delta t \sum_{j=0}^{N-1}\langle p^{[j+1]},\frac{y^{[j+1]}-y^{[j]}}{\Delta t }-\sum_{i=1}^s f(y_i^{[j]},u_i^{[j]}) \rangle \\
&-&\Delta t \sum_{j=0}^{N-1}\Delta t \sum_{i=1}^s b_i\langle \ell_i^{[j]},\frac{y_i^{[j]}-y^{[j]}}{\Delta t }-\sum_{m=1}^s a_{i,m}f(y_m^{[j]},u_m^{[j]}) \rangle.
\end{eqnarray*}
An equivalent formulation of \eqref{Doptimisation1} subject to \eqref{RK1}-\eqref{RK3} is
$$\inf_{
\begin{array}{l}
\{u_i^{[j]}\}_{j=0}^{N-1},\\
\{y^{[j]}\}_{j=1}^{N},\{y_i^{[j]}\}_{j=1}^{N},
\end{array}
} \, 
\sup_{
\{p^{[j]}\}_{j=1}^{N},\{\ell_i^{[j]}\}_{j=1}^{N}}\, \mathcal{L}$$
Taking arbitrary and independent variations
$$y^{[j]}+\xi v^{[j]},\quad y_i^{[j]}+\xi v_i^{[j]},\quad u_i^{[j]}+\xi w_i^{[j]}\quad p^{[j+1]}+\xi \gamma^{[j+1]},\quad \ell_i^{[j]}+\xi \gamma_i^{[j]}$$
and imposing $\delta \mathcal{L}=0$ for all variations,
we obtain
\begin{eqnarray*}
0=\delta \mathcal{L}&=&\langle \partial \mathcal{J}(y^{[N]}), v^{[N]}\rangle-\Delta t \sum_{j=0}^{N-1}\langle \gamma^{[j+1]},\frac{y^{[j+1]}-y^{[j]}}{\Delta t}-\sum_{i=1}^s b_i f(y_i^{[j]},u_i^{[j]})\rangle\\
&-&\Delta t \sum_{j=0}^{N-1}\langle p^{[j+1]},\frac{v^{[j+1]}-v^{[j]}}{\Delta t}-\sum_{i=1}^s b_i\partial_y f(y_i^{[j]},u_i^{[j]})v_i^{[j]}\rangle\\
&+&\Delta t \sum_{j=0}^{N-1}\langle p^{[j+1]},\sum_{i=1}^s b_i\partial_u f(y_i^{[j]},u_i^{[j]})w_i^{[j]} \rangle\\
&-&\Delta t \sum_{j=0}^{N-1} \Delta t \sum_{i=1}^s b_i\langle \gamma_i^{[j+1]},\frac{y_i^{[j+1]}-y^{[j]}}{\Delta t }-\sum_{m=1}^s a_{i,m} f(y_m^{[j]},u_m^{[j]})\rangle\\
&-&\Delta t \sum_{j=0}^{N-1}\Delta t \sum_{i=1}^s b_i\langle \,
\ell_i^{[j]}, \frac{v_i^{[j+1]}-v^{[j]}}{\Delta t }+\sum_{m=1}^s a_{i,m}\partial_y f(y_m^{[j]},u_m^{[j]})v_m^{[j]}\,\rangle\\
&-&\Delta t \sum_{j=0}^{N-1}\Delta t \sum_{i=1}^s b_i\langle \,
\ell_i^{[j]}, \sum_{m=1}^s a_{i,m}
\partial_u f(y_m^{[j]},u_m^{[j]})w_m^{[j]} \,\rangle.
\end{eqnarray*}
Because the variations $\gamma^{[j]}$, $\gamma_i^{[j]}$ are arbitrary, we must have
\begin{eqnarray*}
\frac{y^{[j+1]}-y^{[j]}}{\Delta t}&=&\sum_{i=1}^s b_i f(y_i^{[j]}, u_i^{[j]})\\
\frac{y_i^{[j+1]}-y^{[j]}}{\Delta t}&=&\sum_{m=1}^s a_{i,m} f(y_m^{[j]}, u_m^{[j]})
\end{eqnarray*}
corresponding to the forward method, \eqref{PRK1}, \eqref{PRK2}, and we are left with terms depending on  $w_i^{[j]}$ and $v_i^{[j]}$ which we can discuss separately. Collecting all the terms containing the variations $w_i^{[j]}$ we get
\begin{equation}
    \label{weak_gradients}
\sum_{j=0}^{N-1}\left(\sum_{i=1}^s b_i\langle p^{[j+1]}, \partial_u f(y_i^{[j]},u_i^{[j]})w_i^{[j]} \rangle-\Delta t \sum_{i=1}^s b_i\sum_{m=1}^s a_{i,m}\langle \,
\ell_i^{[j]}, \partial_u f(y_m^{[j]},u_m^{[j]})w_m^{[j]} \,\rangle\right).
\end{equation}
 In \eqref{weak_gradients}, renaming the indexes so that $i\rightarrow k$ in the first sum and $m\rightarrow k$ and $w_m^{[j]}\rightarrow w_k^{[j]}$ in the second sum, we get
$$
\sum_{k=1}^s \left(b_k\langle p^{[j+1]}, \partial_u f(y_k^{[j]}, u_k^{[j]})w_k^{[j]} \rangle-\Delta t \sum_{i=1}^s b_i a_{i,k}\langle \,
\ell_i^{[j]}, \partial_u f(y_k^{[j]},u_k^{[j]})w_k^{[j]} \,\rangle \right), 
$$
for $j=0,\dots, N-1$, and
$$
\sum_{k=1}^s \left(\langle b_k \partial_u f(y_k^{[j]},u_k^{[j]})^T p^{[j+1]}-\Delta t \sum_{i=1}^sb_i a_{i,k} \partial_u f(y_k^{[j]},u_k^{[j]})^T\ell_i^{[j]},w_k^{[j]} \rangle \right).
$$
Because each of the variations  $w_k^{[j]}$ is arbitrary for  $k=1,\dots , s$ and $j=0,\dots, N-1$ each of the terms must vanish and we get
$$
 \langle \partial_u f(y_k^{[j]},u_k^{[j]})^Tp^{[j+1]}-\Delta t \sum_{i=1}^s\frac{b_i a_{i,k}}{b_k}\partial_u f(y_k^{[j]},u_k^{[j]})^T\ell_i^{[j]},w_k^{[j]} \rangle  =0,
$$
and finally
$$
\partial_u f(y_k^{[j]},u_k^{[j]})^T\left(p^{[j+1]}-\Delta t \sum_{i=1}^s\frac{b_i a_{i,k}}{b_k}\ell_i^{[j]}\right)=0
$$
corresponding to the discretised constraints, and where we recognise that
$$p_k^{[j]}=p^{[j+1]}-\Delta t \sum_{i=1}^s\frac{b_i a_{i,k}}{b_k}\ell_i^{[j]}.$$
The remaining terms contain the variations $v_i^{[j]}$ and we have
\begin{eqnarray*}
\langle \partial \mathcal{J}(y^{[N]}), v^{[N]}\rangle&-&\Delta t \sum_{j=0}^{N-1}\langle p^{[j+1]},\frac{v^{[j+1]}-v^{[j]}}{\Delta t }-\sum_{i=1}^s b_i\partial_y f(y_i^{[j]},u_i^{[j]})v_i^{[j]}\rangle \\
&-&\Delta t ^2\sum_{j=0}^{N-1}\sum_{i=1}^s b_i\langle \,
\ell_i^{[j]}, \frac{v_i^{[j+1]}-v^{[j]}}{\Delta t }+\sum_{m=1}^s a_{i,m}\partial_y f(y_m^{[j]},u_m^{[j]})v_m^{[j]}\,\rangle =0
\end{eqnarray*}
There are only two terms involving $v_N$, leading to
$$\mathcal{J}(y^{[N]}), v^{[N]}\rangle-\langle p^{[N]},v_N\rangle =0$$
corresponding to the condition $p^{[N]}=\mathcal{J}(y^{[N]})$. We consider separately for each $j$ terms involving $v^{[j]}$ and $V_i^{[j]}$ for $i=1,\dots, s$ and see that
\begin{eqnarray*}
&&\langle p^{[j+1]},v^{[j]}+\Delta t \sum_{i=1}^s b_i\partial_y f(y_i^{[j]},u_i^{[j]})v_i^{[j]}\rangle \\
&-&\Delta t \sum_{i=1}^s b_i\langle \ell_{i}^{[j]},v_i^{[j]}-v^{[j]}+\Delta t \sum_{m=1}^sa_{i,m}\partial_y f(y_m^{[j]},u_m^{[j]})v_m^{[j]}\\
&-&\langle p^{[j]},v^{[j]}\rangle=0
\end{eqnarray*}
which we rearrange into
\begin{eqnarray*}
&&\langle p^{[j+1]}-p^{[j]}+h\sum_{i=1}^sb_i\ell_i^{[j]},v^{[j]}\rangle\\
&&\Delta t \sum_{k=1}^sb_k\langle \partial_y f(y_k^{[j]},u_k^{[k]})^Tp^{[j+1]},v_k^{[j]} \rangle\\
&-&\Delta t \sum_{i=1}^sb_i\left( \langle \ell_i^{[j]},v_i^{[j]}\rangle +\Delta t  \sum_{m=1}^sa_{i,m}\langle \partial_y f(y_m^{[j}],u_m^{[j]})\ell_i^{[j]},v_m^{[j]}\rangle\right)=0 
\end{eqnarray*}
This yields
$$
 p^{[j+1]}=p^{[j]}-\Delta t \sum_{i=1}^sb_i\ell_i^{[j]}
$$
and
\begin{eqnarray*}
&&\Delta t \sum_{k=1}^sb_k\langle \partial_y f(y_k^{[j]},u_k^{[k]})^Tp^{[j+1]},v_k^{[j]} \rangle\\
&-&\Delta t \sum_{i=1}^sb_i\left( \langle \ell_i^{[j]},v_i^{[j]}\rangle +\Delta t  \sum_{m=1}^sa_{i,m}\langle \partial_y f(u_m^{[j}],u_m^{[j]})\ell_i^{[j]},v_m^{[j]}\rangle\right)=0.
\end{eqnarray*}
From the last equation we get
$$0=\partial_y f(y_k^{[j]},u_k^{[k]})^Tp^{[j+1]}-\ell_k^{[j]}-\Delta t \sum_{i=1}^s\frac{b_i a_{i,k}}{b_k}\partial_y f(y_k^{[j]},u_k^{[j]})^T\ell_i^{[j]}.$$
with
$$\ell_k^{[j]}=\partial_y f(y_k^{[j]},u_k^{[j]})^T \left[p^{[j+1]}-\Delta t \sum_{i=1}^s\frac{b_i a_{i,k}}{b_k}\ell_i^{[j]}\right].$$
\end{proof}
\end{appendices}


\providecommand{\href}[2]{#2}
\providecommand{\arxiv}[1]{\href{http://arxiv.org/abs/#1}{arXiv:#1}}
\providecommand{\url}[1]{\texttt{#1}}
\providecommand{\urlprefix}{URL }

\end{document}